\documentclass[10pt,a4paper]{article} 
\usepackage{amsthm}
\usepackage[pdftex]{graphicx} 
\usepackage[pdftex]{hyperref} 
\usepackage{amsmath}
\usepackage{amssymb}
\usepackage{mathtools}
\usepackage{ascmac}
\usepackage{mathrsfs}
\usepackage{setspace}
\usepackage{nccmath}
\usepackage{color}
\usepackage{enumerate}
\usepackage[rm]{titlesec}
\usepackage{aliascnt} 
\usepackage{indentfirst}
\usepackage{tikz}
\usetikzlibrary{positioning}
\usetikzlibrary{topaths,calc}
\usepackage{here}
\usetikzlibrary{cd}
\usepackage{tikz-cd}
\usepackage{comment}

\usepackage[english]{babel}  
\addto\extrasenglish{}
\addto\extrasenglish{}

\usepackage[margin=15mm]{geometry} 
\titleformat*{\section}{\center\large\bfseries}
\titleformat*{\subsection}{\center\bfseries}

\hypersetup{colorlinks=true,citecolor=blue,linkcolor=red,urlcolor=cyan}

\numberwithin{equation}{section}

\theoremstyle{plain} 
\newtheorem{thm}{Theorem}[section] 
 
\newaliascnt{lem}{thm} 
\newtheorem{lem}[lem]{Lemma}
\aliascntresetthe{lem}

\newaliascnt{prop}{thm} 
\newtheorem{prop}[prop]{Proposition}
\aliascntresetthe{prop}

\newaliascnt{cor}{thm} 
\newtheorem{cor}[cor]{Corollary}
\aliascntresetthe{cor}

\newaliascnt{conj}{thm} 
\newtheorem{conj}[conj]{Conjecture}
\aliascntresetthe{conj}

\theoremstyle{definition} 
\newaliascnt{defi}{thm} 
\newtheorem{defi}[defi]{Definition}
\aliascntresetthe{defi}

\newaliascnt{exa}{thm} 
\newtheorem{exa}[exa]{Example}
\aliascntresetthe{exa}

\newaliascnt{rem}{thm} 
\newtheorem{rem}[rem]{Remark}
\aliascntresetthe{rem}

\newaliascnt{nota}{thm} 

\aliascntresetthe{nota}

\newaliascnt{obs}{thm} 

\aliascntresetthe{obs}

\newaliascnt{property}{thm} 

\aliascntresetthe{property}

\newaliascnt{ass}{thm} 
\newtheorem{ass}[ass]{Assumption}
\aliascntresetthe{ass}

\newtheorem*{notation}{Notation}
\newtheorem*{acknowledgements}{Acknowledgements}
\newtheorem*{duality}{Duality}
\newtheorem*{curvshape}{Shape of $\tilde{\kappa}(\alpha,h;x,y)$ on $\alpha\in[0,1]$}
\newtheorem*{long-scale}{Long-scale curvature}
\newtheorem*{how}{How to handle $h^\prime(0)$ and $h^\prime(1)$}
\newtheorem*{Appli}{Further applications of $W_h$}

\newcommand{\dis}{\displaystyle} 
\newcommand{\A}{\alpha}
\newcommand{\B}{\beta}

\newcommand{\dex}{\delta_x}
\newcommand{\dey}{\delta_y}
\newcommand{\K}{\kappa}
\newcommand{\la}{\lambda}
\newcommand{\N}{\mathbb{N}}
\newcommand{\PP}{\mathscr{P}}
\newcommand{\R}{\mathbb{R}} 

\newcommand{\Z}{\mathbb{Z}}
\newcommand{\maru}[1]{\raise0.2ex\hbox{\textcircled{\scriptsize{#1}}}} 
\newcommand{\eps}{\varepsilon} 
\newcommand{\uto}{\uparrow}
\newcommand{\dto}{\downarrow}
 
\DeclareMathOperator\supp{supp}

\newcommand{\rwx}{m_x^\alpha} 
\newcommand{\rwy}{m_y^\alpha} 
\newcommand{\wxy}{W_1\big(m_x^\alpha,m_y^\alpha\big)}
\newcommand{\whxy}{W_h\big(m_x^\alpha,m_y^\alpha\big)}
\newcommand{\kaxy}{\kappa(\alpha;x,y)}
\newcommand{\tkaxy}{\tilde{\kappa}(\alpha,h;x,y)}
\newcommand{\kxy}{\kappa(x,y)}
\newcommand{\tkxy}{\tilde{\kappa}(h;x,y)}

\def\:={\coloneqq} 
\def\diam(#1){\mathsf{diam}(#1)}
\def\W(#1){W_1\big(#1\big)}
\def\wh(#1){W_h\big(#1\big)}
\def\01{\{0,1\}}
\def\blue(#1){\textcolor{blue}{#1}}
\def\red(#1){\textcolor{red}{#1}}


\begin{document}

\title{A new transport distance and its associated Ricci curvature \\ of hypergraphs}
\author{Tomoya Akamatsu\thanks{Department of Mathematics, Osaka University, Osaka 560-0043, Japan (\texttt{u149852g@ecs.osaka-u.ac.jp})
\newline
\textit{2020 Mathematics Subject Classification.} 05C12, 05C81, 53C23.
\newline
\textit{Key words and phrases.} Transport distance on hypergraphs; Lin--Lu--Yau type curvature of hypergraphs; Bonnet--Myers type estimate.
}}
\date{\empty}
\maketitle

\begin{abstract}
The coarse Ricci curvature of graphs introduced by Ollivier as well as its modification by Lin--Lu--Yau have been studied from various aspects.
In this paper, we propose a new transport distance appropriate for hypergraphs and study a generalization of Lin--Lu--Yau type curvature of hypergraphs.
As an application, we derive a Bonnet--Myers type estimate for hypergraphs under a lower Ricci curvature bound associated with our transport distance.
We remark that our transport distance is new even for graphs and worthy of further study.
\end{abstract}

\section{Introduction} \label{Intro}
Let $\PP(X)$ denote the set of all Borel probability measures on complete and separable metric space $(X,d)$ and 
$m_x(\cdot) \in\PP(X)$ be a random walk starting from a point $x\in X$ in $(X,d)$.
We call ($X,d$) equipped with a random walk $m=\big(m_x(\cdot)\big)_{x\in X}$ 
from each point a \emph{metric space with a random walk $m$} and denote it as $(X,d,m)$.
Ollivier \cite{Ol} introduced the coarse Ricci curvature for $(X,d,m)$.
The idea of coarse Ricci curvature is to measure the curvature of $(X,d,m)$ by comparing the distance $d(x,y)$ between $x,y\in X$ 
and the distance between the probability measures $m_x,m_y\in\PP(X)$ at $x,y$.
The following $L^1$-Wasserstein distance is used to measure the distance between two probability measures.

\begin{defi}[$L^1$-Wasserstein distance] \label{W_1 dist}
Let $(X,d)$ be a complete and separable metric space.
The \emph{$L^1$-Wasserstein distance $W_1$} on $\PP(X)$ is defined by 
\begin{align} 
W_1(\mu,\nu) \:= \inf_{\pi\in\Pi(\mu,\nu)} \int_{X\times X} d(x,y) \, \textrm{d}\pi(x,y) \label{W1 dist}
\end{align}
for $\mu, \nu\in\PP(X)$, where $\Pi(\mu,\nu)$ denotes the set of all \emph{couplings} of $\mu,\nu$, i.e., 
$\pi\in\Pi(\mu,\nu)$ is a probability measure on $X\times X$ satisfying 
$\pi(A\times X)=\mu(A),\pi(X\times B)=\nu(B)$ for any Borel subsets $A,B\subset X$.
\end{defi}

A coupling that attains the infimum of the right-hand side of \eqref{W1 dist} is called an \emph{optimal coupling} of $(\mu,\nu)$.
An optimal coupling is not necessarily unique.

\begin{defi}[Coarse Ricci curvature for a metric space with a random walk; {\cite[Definition 3]{Ol}}] \label{coarseRicci}
Let $(X,d,m)$ be a metric space with a random walk $m$ and let $x,y\in X$ $(x\neq y)$.
Define
\begin{align*} 
\K_{\mathrm{Ol}}(x,y) \:= 1-\frac{W_1(m_x,m_y)}{d(x,y)}
\end{align*}
to be the \emph{coarse Ricci curvature} of $(X,d,m)$ along $(x,y)$.
\end{defi}

Coarse Ricci curvature can be defined especially in discrete spaces and was modified as Ricci curvature for graphs by Lin--Lu--Yau \cite{LLY} (\autoref{LLY-curv}) 
that we call the LLY--Ricci curvature.
Note that the LLY--Ricci curvature is a quantity defined for edges in graphs.
Lin--Lu--Yau showed a Bonnet--Myers type inequality on the diameter of graphs, and 
a Lichnerowicz type inequality on the first nonzero eigenvalue of the Laplacian for graphs of their Ricci curvature bounded below.

As a generalization of graphs which are representations of binary relations, there are hypergraphs which represent higher dimensional relations 
(see \autoref{hypergraph} for the definition of hypergraphs).
Recently, there is growing interest on optimal transports and curvature of hypergraphs.
Our hypergraphs will be undirected unless otherwise stated.
As for the study of Ricci curvature of hypergraphs, Asoodeh--Gao--Evans \cite{AGE} first introduced curvature of hyperedges of hypergraphs 
and Eidi--Jost \cite{EJ} introduced curvature of hyperedges of directed hypergraphs.
In addition, very recently, Ikeda--Kitabeppu--Takai--Uehara \cite{IKTU} introduced coarse Ricci curvature between two vertices of hypergraphs. 
In \cite{IKTU}, they defined coarse Ricci curvature between two points using the submodular hypergraph Laplacian introduced in \cite{Yo} 
without via a random walk on a hypergraph.
They proved that their curvature coincides with LLY--Ricci curvature for graphs 
and also derived Bonnet--Myers type and Lichnerowicz type inequalities and a gradient estimate for their curvature in \cite{IKTU}.
Moreover, Kitabeppu--Matsumoto \cite{KM} proved a kind of Cheng maximal diameter theorem for this curvature.

In hypergraphs, there is a problem if we simply use the $L^1$-Wasserstein distance to measure the distance between two random walks as in the case of graphs.
That is to say, the transport by $L^1$-Wasserstein distance regards the original hypergraph as a graph with each hyperedge as a clique.
For instance, consider a hypergraph as in \autoref{intro-hypergraph}.
Let $\A$ be sufficiently close to $1$.
The weights on each vertex by random walks $\rwx,\rwy$ (see \autoref{random walk}) are as in \autoref{weight of vertex}.

\begin{figure}[H]
\begin{tabular}{ccc}
\begin{minipage}{0.43\hsize}
\begin{center}
\begin{tikzpicture}
\node (v1) at (0,0) {};
\node (v2) at (1.2,0) {};
\node (v3) at (2.4,0) {};
\node (v4) at (0,1.2) {};
\node (v5) at (1.2,1.2) {};
\node (v6) at (2.4,1.2) {};
\node (v7) at (0,2.4) {};
\node (v8) at (1.2,2.4) {};
\node (v9) at (2.4,2.4) {};
\foreach \v in {1,2,...,9} {\fill (v\v) circle (0.1);} 
\node at ($(v7) + (-0.2,0.2)$) {$y$};
\node at ($(v5) + (0.2,-0.2)$) {$x$};
\path[draw=red] ($(v4) + (-0.5,0)$) 
to[out=90,in=270] ($(v7) + (-0.5,0)$)
to[out=90,in=180] ($(v7) + (0,0.5)$)
to[out=0,in=180] ($(v8) + (0,0.5)$)
to[out=0,in=90] ($(v8) + (0.5,0)$)
to[out=270,in=90] ($(v5) + (0.5,0)$)
to[out=270,in=0] ($(v5) + (0,-0.5)$)
to[out=180,in=0] ($(v4) + (0,-0.5)$)
to[out=180,in=270] ($(v4) + (-0.5,0)$);
\path[draw=green] ($(v5) + (-0.4,0)$) 
to[out=90,in=270] ($(v8) + (-0.4,0)$)
to[out=90,in=180] ($(v8) + (0,0.4)$)
to[out=0,in=180] ($(v9) + (0,0.4)$)
to[out=0,in=90] ($(v9) + (0.4,0)$)
to[out=270,in=90] ($(v6) + (0.4,0)$)
to[out=270,in=0] ($(v6) + (0,-0.4)$)
to[out=180,in=0] ($(v5) + (0,-0.4)$)
to[out=180,in=270] ($(v5) + (-0.4,0)$);
\path[draw=orange] ($(v1) + (-0.4,0)$) 
to[out=90,in=270] ($(v4) + (-0.4,0)$)
to[out=90,in=180] ($(v4) + (0,0.4)$)
to[out=0,in=180] ($(v5) + (0,0.4)$)
to[out=0,in=90] ($(v5) + (0.4,0)$)
to[out=270,in=90] ($(v2) + (0.4,0)$)
to[out=270,in=0] ($(v2) + (0,-0.4)$)
to[out=180,in=0] ($(v1) + (0,-0.4)$)
to[out=180,in=270] ($(v1) + (-0.4,0)$);
\path[draw=cyan] ($(v2) + (-0.5,0)$) 
to[out=90,in=270] ($(v5) + (-0.5,0)$)
to[out=90,in=180] ($(v5) + (0,0.5)$)
to[out=0,in=180] ($(v6) + (0,0.5)$)
to[out=0,in=90] ($(v6) + (0.5,0)$)
to[out=270,in=90] ($(v3) + (0.5,0)$)
to[out=270,in=0] ($(v3) + (0,-0.5)$)
to[out=180,in=0] ($(v2) + (0,-0.5)$)
to[out=180,in=270] ($(v2) + (-0.5,0)$);
\end{tikzpicture}
\caption{A hypergraph $H$: 
$H$ consists of $9$ vertices including $x,y$ and $4$ hyperedges represented by 4 colored curves.} \label{intro-hypergraph}
\end{center}
\end{minipage} 
\begin{minipage}{0.1\hsize}
\begin{center}
\end{center}
\end{minipage}
\begin{minipage}{0.43\hsize}
\begin{center}
\begin{tikzpicture}
\node (v1) at (0,0) {};
\node (v2) at (1.2,0) {};
\node (v3) at (2.4,0) {};
\node (v4) at (0,1.2) {};
\node (v5) at (1.2,1.2) {};
\node (v6) at (2.4,1.2) {};
\node (v7) at (0,2.4) {};
\node (v8) at (1.2,2.4) {};
\node (v9) at (2.4,2.4) {};
\foreach \v in {1,2,...,9} {\fill (v\v) circle (0.1);} 
\node at ($(v7) + (-0.2,0.2)$) {\red($\A$)};
\node at ($(v8) + (-0.2,0.2)$) {\red($\frac{\B}{3}$)};
\node at ($(v4) + (-0.2,0.2)$) {\red($\frac{\B}{3}$)};
\node at ($(v5) + (-0.2,0.2)$) {\red($\frac{\B}{3}$)};
\node at ($(v5) + (0.2,-0.2)$) {\blue($\A$)};
\node at ($(v1) + (0.2,-0.2)$) {\blue($\frac{\B}{8}$)};
\node at ($(v2) + (0.2,-0.2)$) {\blue($\frac{\B}{8}$)};
\node at ($(v3) + (0.2,-0.2)$) {\blue($\frac{\B}{8}$)};
\node at ($(v4) + (0.2,-0.2)$) {\blue($\frac{\B}{8}$)};
\node at ($(v6) + (0.2,-0.2)$) {\blue($\frac{\B}{8}$)};
\node at ($(v7) + (0.2,-0.2)$) {\blue($\frac{\B}{8}$)};
\node at ($(v8) + (0.2,-0.2)$) {\blue($\frac{\B}{8}$)};
\node at ($(v9) + (0.2,-0.2)$) {\blue($\frac{\B}{8}$)};
\end{tikzpicture}
\caption{The weights of each vertex by $\rwx,\rwy$ are drawn in \blue(blue) and \red(red), respectively.
Here, $\B\:=1-\A$.} \label{weight of vertex}
\end{center}
\end{minipage}
\end{tabular}
\end{figure}

Then, in terms of the $L^1$-Wasserstein distance, each hyperedge is considered to be a complete subgraph so that 
one cannot see the difference between the hypergraph $H$ and its clique expansion $H^\prime$ as shown in \autoref{clique expansion}.
In this paper, to distinguish $H$ and $H^\prime$, we introduce a new transport method based on the structure of hyperedges, 
i.e., a new distance between probability measures.
Roughly speaking, when we transport from $\rwx$ to $\rwy$ in \autoref{weight of vertex}, we transport the weights for each hyperedge collectively as in \autoref{this transport}. 
Starting from the cyan hyperedge, we next transport on the orange and green hyperedges (these two are in arbitrary order), 
and finally transport the appropriate amount on the red hyperedge.

\begin{figure}[H]
\begin{tabular}{ccc}
\begin{minipage}{0.43\hsize}
\begin{center}
\begin{tikzpicture}
\node (v1) at (0,0) {};
\node (v2) at (1.2,0) {};
\node (v3) at (2.4,0) {};
\node (v4) at (0,1.2) {};
\node (v5) at (1.2,1.2) {};
\node (v6) at (2.4,1.2) {};
\node (v7) at (0,2.4) {};
\node (v8) at (1.2,2.4) {};
\node (v9) at (2.4,2.4) {};
\foreach \v in {1,2,...,9} {\fill (v\v) circle (0.1);} 
\draw (0,0)--(1.2,0);
\draw (1.2,0)--(2.4,0);
\draw (0,1.2)--(1.2,1.2);
\draw (1.2,1.2)--(2.4,1.2);
\draw (0,2.4)--(1.2,2.4);
\draw (1.2,2.4)--(2.4,2.4);
\draw (0,0)--(0,1.2);
\draw (1.2,0)--(1.2,1.2);
\draw (2.4,0)--(2.4,1.2);
\draw (0,1.2)--(0,2.4);
\draw (1.2,1.2)--(1.2,2.4);
\draw (2.4,1.2)--(2.4,2.4);
\draw (0,0)--(1.2,1.2);
\draw (1.2,0)--(2.4,1.2);
\draw (0,1.2)--(1.2,0);
\draw (1.2,1.2)--(2.4,0);
\draw (0,1.2)--(1.2,2.4);
\draw (1.2,1.2)--(2.4,2.4);
\draw (0,2.4)--(1.2,1.2);
\draw (1.2,2.4)--(2.4,1.2);
\end{tikzpicture}
\caption{The graph $H^\prime$ obtained by clique expansion of $H$ in \autoref{intro-hypergraph}.
The transport by $W_1$ follows this graph structure.} \label{clique expansion}
\end{center}
\end{minipage} 
\begin{minipage}{0.1\hsize}
\begin{center}
\end{center}
\end{minipage}
\begin{minipage}{0.43\hsize}
\begin{center}
\begin{tikzpicture}
\node (v1) at (0,0) {};
\node (v2) at (1.2,0) {};
\node (v3) at (2.4,0) {};
\node (v4) at (0,1.2) {};
\node (v5) at (1.2,1.2) {};
\node (v6) at (2.4,1.2) {};
\node (v7) at (0,2.4) {};
\node (v8) at (1.2,2.4) {};
\node (v9) at (2.4,2.4) {};
\foreach \v in {1,2,...,9} {\fill (v\v) circle (0.1);} 
\path[draw=red, dashed] ($(v4) + (-0.5,0)$) 
to[out=90,in=270] ($(v7) + (-0.5,0)$)
to[out=90,in=180] ($(v7) + (0,0.5)$)
to[out=0,in=180] ($(v8) + (0,0.5)$)
to[out=0,in=90] ($(v8) + (0.5,0)$)
to[out=270,in=90] ($(v5) + (0.5,0)$)
to[out=270,in=0] ($(v5) + (0,-0.5)$)
to[out=180,in=0] ($(v4) + (0,-0.5)$)
to[out=180,in=270] ($(v4) + (-0.5,0)$);
\path[draw=green, dashed] ($(v5) + (-0.4,0)$) 
to[out=90,in=270] ($(v8) + (-0.4,0)$)
to[out=90,in=180] ($(v8) + (0,0.4)$)
to[out=0,in=180] ($(v9) + (0,0.4)$)
to[out=0,in=90] ($(v9) + (0.4,0)$)
to[out=270,in=90] ($(v6) + (0.4,0)$)
to[out=270,in=0] ($(v6) + (0,-0.4)$)
to[out=180,in=0] ($(v5) + (0,-0.4)$)
to[out=180,in=270] ($(v5) + (-0.4,0)$);
\path[draw=orange, dashed] ($(v1) + (-0.4,0)$) 
to[out=90,in=270] ($(v4) + (-0.4,0)$)
to[out=90,in=180] ($(v4) + (0,0.4)$)
to[out=0,in=180] ($(v5) + (0,0.4)$)
to[out=0,in=90] ($(v5) + (0.4,0)$)
to[out=270,in=90] ($(v2) + (0.4,0)$)
to[out=270,in=0] ($(v2) + (0,-0.4)$)
to[out=180,in=0] ($(v1) + (0,-0.4)$)
to[out=180,in=270] ($(v1) + (-0.4,0)$);
\path[draw=cyan, dashed] ($(v2) + (-0.5,0)$) 
to[out=90,in=270] ($(v5) + (-0.5,0)$)
to[out=90,in=180] ($(v5) + (0,0.5)$)
to[out=0,in=180] ($(v6) + (0,0.5)$)
to[out=0,in=90] ($(v6) + (0.5,0)$)
to[out=270,in=90] ($(v3) + (0.5,0)$)
to[out=270,in=0] ($(v3) + (0,-0.5)$)
to[out=180,in=0] ($(v2) + (0,-0.5)$)
to[out=180,in=270] ($(v2) + (-0.5,0)$);
\draw[arrows=->, ultra thick, draw=cyan] ($(v3)+(-0.15,0.15)$) to ($(v5)+(0.15,-0.15)$);
\draw[arrows=->, ultra thick, draw=orange] ($(v2)+(-0.15,0.15)$) to ($(v4)+(0.15,-0.15)$);
\draw[arrows=->, ultra thick, draw=orange] ($(v1)+(0,0.2)$) to ($(v4)+(0,-0.2)$);
\draw[arrows=->, ultra thick, draw=green] ($(v6)+(-0.15,0.15)$) to ($(v8)+(0.15,-0.15)$);
\draw[arrows=->, ultra thick, draw=green] ($(v9)+(-0.2,0)$) to ($(v8)+(0.2,0)$);
\draw[arrows=->, ultra thick, draw=red] ($(v5)+(-0.15,0.15)$) to ($(v7)+(0.15,-0.15)$);
\draw[arrows=->, ultra thick, draw=red] ($(v4)+(0,0.2)$) to ($(v7)+(0,-0.2)$);
\draw[arrows=->, ultra thick, draw=red] ($(v8)+(-0.2,0)$) to ($(v7)+(0.2,0)$);
\node at ($(v2) + (0.2,0.2)$) {$v_1$};
\node at ($(v6) + (-0.2,-0.2)$) {$v_2$};
\end{tikzpicture}
\caption{The transport method proposed in this paper.
Our distance acts to combine the transport in the same hyperedge as possible.} \label{this transport}
\end{center}
\end{minipage}
\end{tabular}
\end{figure}

In this paper we consider a transport cost in each hyperedge to be discounted as the amount of transport increases.
We use a concave function in this discount.
Then, we take the sum of the costs of those transports in hyperedges, and introduce a new distance as the minimum of the total cost.
Although this idea of discounting the cost is a simple one, it seems to be the setting that often appears in reality.
Moreover, we stress that our transport distance is new even for graphs and seems worthy of further investigations.
Then, we generalize the coarse Ricci curvature with our distance.
As an application of this curvature, we also derived a Bonnet--Myers type estimate of the diameter of a hypergraph.

This paper is organized as follows.
In \autoref{Premininaries}, we briefly review the LLY--Ricci curvature of graphs along the line of \cite{LLY}.
\autoref{main section} is the core of this paper.
First, in \autoref{hypergraph}, we explain our setting of hypergraphs.
Then, in \autoref{SOT}, we briefly review the structured optimal transport introduced in \cite{AJJ} which is behind our construction.
Next, we describe our transport cost/distance in \autoref{idea}.
In \autoref{hcurv-subsec} we introduce the Lin--Lu--Yau type curvature associated with our transport distance for hypergraphs.
We can argue basically in the same way as \autoref{Premininaries}, 
however, we need some deformations due to the modification of the cost by a concave function.
In \autoref{Examples}, we calculate the curvature in several examples.
We also calculate the Ollivier type curvature and the LLY--Ricci curvature for comparison.
In \autoref{B-M section} we derive a Bonnet--Myers type inequality under a lower Ricci curvature bound.
Finally, we discuss some further problems related to our transport distance and its associated Ricci curvature in \autoref{FP}.

\begin{notation}
We denote the set of all positive integers by $\N$, i.e., $\N\:=\Z_{\geq1}$.
We set $[n]\:=\{1,2,\ldots,n\}$ and $[n]_0\:=[n]\cup\{0\}$ for $n\in\N$.
For $a\in\R$, denote by $\lfloor a\rfloor$ the largest integer less than or equal to $a$, and by $\lceil a\rceil$ the smallest integer greater than or equal to $a$.
For a metric space $(X,d)$, we denote the set of all Borel probability measures with bounded support on $(X,d)$ by $\PP_c(X)$.
\end{notation}

\begin{acknowledgements}
The author would like to thank his supervisor Shin-ichi Ohta for many fruitful advices, discussions and encouragements.
He is also grateful to Daisuke Kazukawa for helpful comments.
\end{acknowledgements}

\section{Premininaries} \label{Premininaries}

In this section, we review the LLY--Ricci curvature of graphs and its properties.

A \emph{graph} $G=(V,E)$ is a couple of a \emph{vertex set} $V$ and an \emph{edge set} $E$.
For vertices $x,y$, if they are connected by an edge, then we say that they are \emph{adjacent} and denote by $x\sim y$.
We set $B_1(x)\:=\big\{y\in V\;|\;d(y,x)\leq1\big\}$ and $d_x\:=\#\{y\in V \;|\; y\sim x\}$, where $d$ is 
the graph distance.
Our graphs will be connected, locally finite and without loops (see also \autoref{defs of hypers}).

\begin{defi}[Random walk with idleness parameter $\A\in\text{[0,1]}$] \label{random walk}
For each vertex $x\in V$ and $\A\in[0,1]$, we define a random walk $\rwx$ as follows: \vspace{-3mm}
\begin{center}
\[ \dis \rwx(y) \:=
\begin{cases}
\A & (y=x) , \vspace{1mm} \\
\frac{1-\A}{d_x} & (y\sim x) , \vspace{1mm} \\
0 & (y\notin B_1(x)) .
\end{cases}\]
\end{center} 
\end{defi}

The coarse Ricci curvature with idleness parameter $\A$ is defined as follows.

\begin{defi}[$\A$-Ollivier--Ricci curvature; \cite{Ol,LLY}] \label{alpha-curv}
Let $\A\in[0,1]$.
The \emph{$\A$-Ollivier--Ricci curvature $\kaxy$} between two vertices $x,y\in V$ is defined as 
\begin{align*}
\kaxy\:=1-\frac{\wxy}{d(x,y)}.
\end{align*}
\end{defi}

Note that \cite{Ol} considered the cases of $\A=0,1/2$, and the general one for $\A\in[0,1]$ was defined in \cite{LLY}.

When $\A=1$, $\K(1;x,y)=0$ for any vertices $x,y$.
Then, in \cite{LLY}, they defined a curvature as the (left) limit $\lim_{\A\uto1}\kaxy/(1-\A)$ instead of simply assigning $\A=1$ (\autoref{LLY-curv}).
To confirm the existence of this limit, the following two lemmas were shown.

\begin{lem}[{\cite[Lemma 2.1]{LLY}}] \label{global concavity}
For any $x,y\in V$, the function $\A\mapsto\K(\A;x,y)$ is concave on $[0,1]$.
\end{lem}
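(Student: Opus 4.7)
The plan is to reduce the claimed concavity of $\A\mapsto\K(\A;x,y)$ to the convexity of $\A\mapsto\wxy$, and then prove the latter by a direct coupling argument.

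First I would record the key linearity: for each vertex $x$ and each $\A\in[0,1]$, the random walk $\rwx$ decomposes as
\begin{equation*}
\rwx \;=\; \A\,\delta_x + (1-\A)\,m_x^0,
\end{equation*}
where $m_x^0$ is the lazy-free random walk (idleness $0$). In particular, for fixed $x$, the map $\A\mapsto\rwx$ is affine with values in $\PP(V)$. Hence for any $\A_1,\A_2\in[0,1]$, any $t\in[0,1]$ and $\A\:=t\A_1+(1-t)\A_2$, one has $\rwx = t\,m_x^{\A_1}+(1-t)\,m_x^{\A_2}$ and the analogous identity at $y$.

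Next I would prove joint convexity of $W_1$ on $\PP(V)\times\PP(V)$: for any probability measures $\mu_i,\nu_i$ ($i=1,2$) on $V$,
\begin{equation*}
W_1\bigl(t\mu_1+(1-t)\mu_2,\; t\nu_1+(1-t)\nu_2\bigr) \;\leq\; t\,W_1(\mu_1,\nu_1)+(1-t)\,W_1(\mu_2,\nu_2).
\end{equation*}
This is the routine step: take optimal couplings $\pi_i\in\Pi(\mu_i,\nu_i)$ attaining $W_1(\mu_i,\nu_i)$ (they exist since $V$ is discrete/finite in the relevant local sense, or in general by standard Kantorovich duality), and observe that $\pi\:=t\pi_1+(1-t)\pi_2$ is a coupling of the convex combinations; integrating $d$ against $\pi$ yields the inequality. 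Applying this with $\mu_i = m_x^{\A_i}$ and $\nu_i = m_y^{\A_i}$ gives
\begin{equation*}
\wxy \;\leq\; t\,W_1\bigl(m_x^{\A_1},m_y^{\A_1}\bigr)+(1-t)\,W_1\bigl(m_x^{\A_2},m_y^{\A_2}\bigr),
\end{equation*}
i.e.\ $\A\mapsto\wxy$ is convex on $[0,1]$.

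Finally, since $d(x,y)>0$ is independent of $\A$, dividing the previous display by $d(x,y)$ and subtracting from $1$ turns the convex function $\A\mapsto\wxy/d(x,y)$ into the concave function $\A\mapsto\kaxy$, as desired. There is no real obstacle here; the only subtlety worth flagging is the existence of optimal couplings $\pi_i$, which in our locally finite graph setting is immediate (the infimum in \eqref{W1 dist} is attained because $\rwx$ and $\rwy$ have finite support in $B_1(x)\cup B_1(y)$, so $\Pi(\mu_i,\nu_i)$ is a compact polytope and the objective is linear).
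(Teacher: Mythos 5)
Your proof is correct: the affine dependence $\rwx=\A\,\delta_x+(1-\A)\,m_x^0$ combined with the joint convexity of $W_1$ (taking convex combinations of optimal couplings) is exactly the standard argument for this lemma, which the paper does not reprove but simply cites from \cite{LLY}. There are no gaps; the existence of optimal couplings you flag at the end is indeed immediate since $\rwx$ and $\rwy$ are finitely supported, and even without it one could work with $\eps$-optimal couplings and let $\eps\dto0$.
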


\begin{lem}[{\cite[Lemma 2.2]{LLY}}] \label{LLY-bdd}
For any $\A\in[0,1]$ and $x,y\in V$, we have
\begin{align*}
|\K(\A;x,y)|\leq\frac{2(1-\A)}{d(x,y)}. 
\end{align*}
\end{lem}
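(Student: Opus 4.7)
Unfolding the definition of $\kaxy$, the claim $|\kaxy|\le 2(1-\A)/d(x,y)$ is equivalent to the two-sided bound
\[
d(x,y)-2(1-\A)\;\le\;\wxy\;\le\;d(x,y)+2(1-\A).
\]
Both inequalities should follow from the decomposition $\rwx=\A\dex+(1-\A)m_x^0$, where $m_x^0$ is the uniform measure on the neighbors of $x$ (and similarly at $y$).

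For the \emph{upper bound} the plan is to use convexity of $W_1$ in its arguments. Writing
\[
\rwx=\A\dex+(1-\A)m_x^0,\qquad \rwy=\A\dey+(1-\A)m_y^0,
\]
convexity gives $\wxy\le \A\,W_1(\dex,\dey)+(1-\A)\,W_1(m_x^0,m_y^0)=\A\,d(x,y)+(1-\A)\,W_1(m_x^0,m_y^0)$. To bound $W_1(m_x^0,m_y^0)$ I observe that any $u\in\supp m_x^0$ and any $v\in\supp m_y^0$ satisfy $d(u,v)\le d(u,x)+d(x,y)+d(y,v)\le d(x,y)+2$, so under any coupling (e.g.\ the product one) the transport cost is at most $d(x,y)+2$. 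Combining these two estimates yields $\wxy\le d(x,y)+2(1-\A)$.

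For the \emph{lower bound} the plan is to use Kantorovich--Rubinstein duality with the $1$-Lipschitz test function $f(z)\:=d(z,y)$. Then
\[
\int f\,\mathrm d\rwx=\A\,d(x,y)+\frac{1-\A}{d_x}\sum_{u\sim x}d(u,y)\;\ge\;\A\,d(x,y)+(1-\A)\bigl(d(x,y)-1\bigr)=d(x,y)-(1-\A),
\]
since $d(u,y)\ge d(x,y)-1$ for every $u\sim x$, while
\[
\int f\,\mathrm d\rwy=\frac{1-\A}{d_y}\sum_{v\sim y}d(v,y)=1-\A.
\]
Subtracting gives $\wxy\ge d(x,y)-2(1-\A)$, completing the proof.

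The argument is essentially mechanical; the only subtlety is choosing the right dual test function for the lower bound, and remembering to apply the triangle inequality in the correct direction so that the ``$-1$'' from the neighbors of $x$ is not offset incorrectly. No step should present a genuine obstacle.
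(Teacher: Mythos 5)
Your proof is correct. The paper itself does not prove this lemma (it is quoted from Lin--Lu--Yau), and the standard argument there is more uniform than yours: one first notes $W_1\big(\rwx,\dex\big)\le 1-\A$ (transport the mass $\frac{1-\A}{d_x}$ from each neighbor of $x$ to $x$, each over distance $1$) and similarly at $y$, and then applies the triangle inequality for $W_1$ in both directions, $\wxy\le W_1\big(\rwx,\dex\big)+d(x,y)+W_1\big(\dey,\rwy\big)$ and $d(x,y)\le W_1\big(\dex,\rwx\big)+\wxy+W_1\big(\rwy,\dey\big)$, which gives the two-sided bound $d(x,y)-2(1-\A)\le\wxy\le d(x,y)+2(1-\A)$ in one stroke. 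Your route replaces the first of these by joint convexity of $W_1$ plus the crude bound $W_1(m_x^0,m_y^0)\le d(x,y)+2$, and the second by the easy direction of Kantorovich duality with the test function $f=d(\cdot,y)$; both steps are valid (note the duality step only needs the trivial inequality $\int f\,\mathrm d\mu-\int f\,\mathrm d\nu\le W_1(\mu,\nu)$ for $1$-Lipschitz $f$, not the full duality theorem). The triangle-inequality argument is slightly preferable in this paper's context because it is exactly the step the author revisits for $W_h$ in Lemma~\ref{boundness} and Remark~\ref{on 2-boundness}, whereas a Kantorovich duality for $W_h$ is listed as an open problem in Section~\ref{FP}; but as a proof of the graph-theoretic statement itself, yours is complete.
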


From \autoref{global concavity} and \autoref{LLY-bdd}, we can see the existence of the (left) limit $\lim_{\A\uto1}\K(\A;x,y)/(1-\A)$.

\begin{defi}[LLY--Ricci curvature; \cite{LLY}] \label{LLY-curv}
We define the \emph{LLY--Ricci curvature} $\kxy$ between two vertices $x,y\in V$ as
\begin{align*}
\kxy\:=\lim_{\A\uto1}\frac{\kaxy}{1-\A}. 
\end{align*}
\end{defi}

We recall basic properties of the LLY--Ricci curvature for later convenience.

\begin{prop}[{\cite[Lemma 2.3]{LLY}}] \label{LLYboundness}
If the LLY--Ricci curvature between any two adjacent vertices is greater than or equal to $\K$, then $\kxy\geq\K$ for any two vertices $x,y$.
In other words, it holds that:
\begin{align*}
\inf_{x,y\in V}\kxy = \inf_{x\sim y}\kxy.
\end{align*}
\end{prop}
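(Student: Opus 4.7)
The plan is to reduce the general case to the adjacent case by chaining inequalities along a geodesic, using the triangle inequality of the $L^1$-Wasserstein distance.

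Fix $x,y\in V$ with $d(x,y)=n$, and choose a geodesic $x=x_0\sim x_1\sim\cdots\sim x_n=y$. The first step is to record the triangle inequality
\[
W_1\bigl(m_x^\alpha,m_y^\alpha\bigr)\leq\sum_{i=0}^{n-1}W_1\bigl(m_{x_i}^\alpha,m_{x_{i+1}}^\alpha\bigr),
\]
which follows from gluing optimal couplings (or directly from the definition of $W_1$ via Lipschitz functions, $W_1(\mu,\nu)=\sup_{\mathrm{Lip}(f)\leq 1}\int f\,d(\mu-\nu)$). Since $d(x_i,x_{i+1})=1$, each summand equals $1-\kappa(\alpha;x_i,x_{i+1})$, and $d(x,y)=n$, so rearranging yields
\[
\kappa(\alpha;x,y)=1-\frac{W_1(m_x^\alpha,m_y^\alpha)}{n}\geq\frac{1}{n}\sum_{i=0}^{n-1}\kappa(\alpha;x_i,x_{i+1}).
\]

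The second step is to divide by $1-\alpha$ and pass to the limit $\alpha\uto1$. Because the sum on the right has only $n$ terms (a finite number independent of $\alpha$), the limit commutes with the sum, and each term tends to $\kappa(x_i,x_{i+1})$ by definition of the LLY--Ricci curvature. The existence of these one-sided limits is guaranteed by \autoref{global concavity} together with \autoref{LLY-bdd}, so we obtain
\[
\kappa(x,y)\geq\frac{1}{n}\sum_{i=0}^{n-1}\kappa(x_i,x_{i+1})\geq\frac{1}{n}\sum_{i=0}^{n-1}\kappa=\kappa,
\]
using the hypothesis $\kappa(x_i,x_{i+1})\geq\kappa$ on each edge of the geodesic. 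This proves $\inf_{x,y\in V}\kappa(x,y)\geq\inf_{x\sim y}\kappa(x,y)$, and the reverse inequality is trivial since adjacent pairs form a subset of all pairs.

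The argument is quite straightforward; the only point to be careful with is the triangle inequality step. Rather than invoking the dual Lipschitz formulation, one can also construct an explicit coupling $\pi\in\Pi(m_x^\alpha,m_y^\alpha)$ by gluing optimal couplings of consecutive pairs $(m_{x_i}^\alpha,m_{x_{i+1}}^\alpha)$ along the geodesic and applying the gluing lemma for probability measures. This is the standard trick and presents no real obstacle; the remaining manipulation is a straightforward limit of a finite sum.
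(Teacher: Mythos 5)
Your proof is correct and follows essentially the same route as the source the paper cites for this statement (\cite[Lemma 2.3]{LLY}): chain the triangle inequality for $W_1$ along a geodesic, rewrite each edge term via $\kappa(\alpha;x_i,x_{i+1})$, divide by $1-\alpha$, and pass to the limit of the finite sum. No gaps; the invocation of \autoref{global concavity} and \autoref{LLY-bdd} for the existence of the limits and the gluing-lemma justification of the triangle inequality are exactly what is needed.
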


A Bonnet--Myers type inequality for graphs by using the LLY--Ricci curvature was obtained as follows.

\begin{thm}[Bonnet--Myers type inequality; {\cite[Theorem 4.1]{LLY}}] \label{LLY-BM}
If there exists some positive constant $\K$ such that $\kxy\geq\K$ for any adjacent vertices $x,y$, then 
\begin{align} 
\diam(G) \leq \bigg\lfloor \frac{2}{\K} \bigg\rfloor \label{ineq:LLY-BM}
\end{align}
holds, where $\diam(G)\:=\sup_{x,y\in V}d(x,y)$.
\end{thm}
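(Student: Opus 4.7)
The plan is to combine the universal lower bound hypothesized on the LLY--Ricci curvature with the universal upper bound that follows from \autoref{LLY-bdd}, and then exploit the fact that graph distances take integer values.

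First, I would invoke \autoref{LLYboundness} to upgrade the hypothesis: since $\kxy\geq\K$ for every adjacent pair $x\sim y$, we in fact have $\kxy\geq\K$ for every pair of distinct vertices $x,y\in V$. This step is what turns a local edge-wise assumption into a genuine global curvature bound that can be tested against far-away pairs.

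Next, I would extract an upper bound on $\kxy$ in terms of $d(x,y)$. For any $\A\in[0,1)$, \autoref{LLY-bdd} gives $\kaxy\leq 2(1-\A)/d(x,y)$; dividing by $1-\A>0$ and passing to the left limit $\A\uto1$ in \autoref{LLY-curv} yields
\begin{align*}
\kxy\leq\frac{2}{d(x,y)}
\end{align*}
for any two distinct vertices $x,y$. Combining this with the lower bound from the previous step gives $\K\leq 2/d(x,y)$, i.e.\ $d(x,y)\leq 2/\K$.

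Finally, since the graph distance $d(x,y)$ is a nonnegative integer, the inequality $d(x,y)\leq 2/\K$ is equivalent to $d(x,y)\leq\lfloor 2/\K\rfloor$. Taking the supremum over all pairs $x,y\in V$ yields \eqref{ineq:LLY-BM}.

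There is essentially no hard step here: the whole argument is a clean chain of (i) globalization via \autoref{LLYboundness}, (ii) an averaged form of \autoref{LLY-bdd}, and (iii) integrality of the graph metric. The only point one should verify with care is that the left limit in \autoref{LLY-curv} preserves the inequality $\kaxy\leq 2(1-\A)/d(x,y)$ after division by $1-\A$, which it does since the bound holds uniformly in $\A<1$.
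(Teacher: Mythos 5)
Your proposal is correct and follows essentially the same route as the proof the paper relies on (the cited argument of \cite[Theorem 4.1]{LLY}, which the paper recalls rather than reproves): globalize the edge-wise bound via \autoref{LLYboundness}, divide the estimate of \autoref{LLY-bdd} by $1-\A$ and pass to the limit to get $\kxy\leq 2/d(x,y)$, and conclude by the integrality of the graph distance.
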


\section{$h$-LLY--Ricci curvature} \label{main section}

\subsection{Hypergraphs} \label{hypergraph}

A \emph{hypergraph} $H=(V,E)$ is a couple of a discrete set $V\neq\emptyset$ and a family of its subsets $E\neq\emptyset$.
An element $v$ of $V$ is called a \emph{vertex} and an element $e$ of $E$ is called a \emph{hyperedge}.
When $|e|=2$ for each $e\in E$, we have a usual graph.
A hyperedge $e$ with $|e|=2$ is simply called an \emph{edge}.
We may denote a hypergraph $H=(V,E)$ simply by $H$.

\begin{defi} \label{defs of hypers}
For a hypergraph $H=(V,E)$, we define the following: 
\begin{itemize}
\item Vertices $v_1,v_2,\ldots,v_k$ are \emph{adjacent} if there exists a hyperedge $e\in E$ such that $v_1,v_2,\ldots,v_k\in e$.
\item The adjacency of vertices $x,y$ is denoted by $x\sim y$.
\item Consider vertices $x,y$ joined by using vertices and hyperedges as 
\begin{align*}
x = v_0 \to e_1 \to v_1 \to e_2 \to v_2 \to \cdots \to v_{k-1} \to e_k \to v_k = y,
\end{align*}
where, for all $i\in[k]$, $v_{k-1},v_k\in e_k$.
This sequence is called a \emph{path} connecting $x,y$, 
We define the \emph{length} of a path by the number of hyperedges used in the path.
For instance, the length of the path represented above is $k$.
\item We define the \emph{graph distance} $d(x,y)$ between vertices $x,y$ by the shortest length of paths connecting $x,y$.
\item The \emph{diameter} $\diam(H)$ of the hypergraph is defined as
\begin{align*}
\diam(H)\:=\sup_{x,y\in V}d(x,y).
\end{align*}
\item We denote the \emph{sphere} $S_r(v)$ and the \emph{closed ball} $B_r(v)$ with radius $r>0$ centered at vertex $v$ by
\begin{align*}
S_r(v)\:=\{ x\in V \;|\; d(x,v)=r \},\quad B_r(v)\:=\{ x\in V \;|\; d(x,v)\leq r \}.
\end{align*}
\item The \emph{degree} $d_x$ of a vertex $x$ is defined as $d_x\:=\#\{y\in V\;|\; y\sim x\}$.
\item When $e$ is a hyperedge with $|e|=1$, $e$ is called a \emph{loop}.
When $e=\{v\}$, $v$ is said to have a loop $e$, which is regarded as $v\sim v$.
\item A hypergraph $H$ is \emph{connected} if there exists a path connecting $x,y$ for any two vertices $x,y$.
\item A hypergraph $H$ is \emph{locally finite} if $d_v<\infty$ for any vertex $v$.
\item A hypergraph $H$ is \emph{simple} if $e_1\neq e_2$ implies $e_1\not\subset e_2$ for hyperedges $e_1,e_2$ of $H$.
\end{itemize}
\end{defi}

Note that if a hypergraph is connected and simple, it has no loops unless $|V|=1$.
In this paper, our hypergraphs will be connected, locally finite and simple unless otherwise stated.

\begin{exa}
In \autoref{H_1}, let $V=\{v_i\;|\; i\in[8]\}$.
In this case, for instance, if we set $E_1=\{e_j \;|\; j\in[5]\}$, then $v_5$ has a loop $e_3$ and $H_1=(V,E_1)$ is not simple because $e_2\subset e_1$.
Also, if we set $E_2=\{e_j\;|\; j\in[4]\}$, then $H_2=(V,E_2)$ is not connected.
If we set $E_3=\{e_1,e_4,e_5\}$, then $H_3=(V,E_3)$ satisfies the requirements in this paper.

\begin{figure}[H]
\begin{center}
\begin{tikzpicture}
\node (v1) at (0,0) {};
\node (v2) at (1.5,0) {};
\node (v3) at (3,0) {};
\node (v4) at (4.5,0) {};
\node (v5) at (-1.5,-1.5) {};
\node (v6) at (0,-1.5) {};
\node (v7) at (1.5,-1.5) {};
\node (v8) at (3,-1.5) {};
\foreach \v in {1,2,...,8} {\fill (v\v) circle (0.1);} 
\path[draw=black] ($(v1) + (-1,0)$) 
to[out=90,in=180] ($(v1) + (0,0.6)$) 
to[out=0,in=180] ($(v4) + (0,0.6)$) 
to[out=0,in=90] ($(v4) + (1,0)$) 
to[out=270,in=0] ($(v4) + (0,-0.6)$) 
to[out=180,in=0] ($(v1) + (0,-0.6)$) 
to[out=180,in=270] ($(v1) + (-1,0)$);
\path[draw=black] ($(v3) + (-0.5,0)$) 
to[out=90,in=180] ($(v3) + (0,0.45)$) 
to[out=0,in=180] ($(v4) + (0,0.45)$) 
to[out=0,in=90] ($(v4) + (0.5,0)$) 
to[out=270,in=0] ($(v4) + (0,-0.45)$) 
to[out=180,in=0] ($(v3) + (0,-0.45)$) 
to[out=180,in=270] ($(v3) + (-0.5,0)$);
\path[draw=black] ($(v1) + (-0.6,0)$) 
to[out=40,in=180] ($(v1) + (0,0.4)$)
to[out=0,in=140] ($(v1) + (0.6,0)$)
to[out=315,in=130] ($(v7) + (0.55,0.05)$)
to[out=300,in=0] ($(v7) + (-0.05,-0.55)$)
to[out=180,in=0] ($(v5) + (0.05,-0.55)$)
to[out=180,in=240] ($(v5) + (-0.55,0.05)$)
to[out=50,in=225] ($(v1) + (-0.6,0)$);
\path[draw=black] ($(v5) + (-0.75,0)$) 
to[out=90,in=180] ($(v5) + (0,0.45)$)
to[out=0,in=90] ($(v5) + (0.75,0)$)
to[out=270,in=0] ($(v5) + (0,-0.45)$)
to[out=180,in=270] ($(v5) + (-0.75,0)$);
\path[draw=black] ($(v6) + (-0.5,0)$) 
to[out=90,in=180] ($(v6) + (0,0.45)$) 
to[out=0,in=180] ($(v8) + (0,0.45)$) 
to[out=0,in=90] ($(v8) + (0.5,0)$) 
to[out=270,in=0] ($(v8) + (0,-0.45)$) 
to[out=180,in=0] ($(v6) + (0,-0.45)$) 
to[out=180,in=270] ($(v6) + (-0.5,0)$); 
\foreach \v in {1,2,...,8} {\fill (v\v) circle (0.1);} 
\node at (0,-0.3) {$v_1$};
\node at (1.5,-0.3) {$v_2$};
\node at (3,-0.3) {$v_3$};
\node at (4.5,-0.3) {$v_4$};
\node at (-1.5,-1.75) {$v_5$};
\node at (0,-1.75) {$v_6$};
\node at (1.5,-1.75) {$v_7$};
\node at (3,-1.75) {$v_8$};
\node at (-1.2,0.4) {$e_1$};
\node at (3.75,0.25) {$e_2$};
\node at (-2.5,-1.5) {$e_3$};
\node at (3.75,-1.5) {$e_4$};
\node at (-1.5,-0.55) {$e_5$};
\end{tikzpicture}
\caption{A hypergraph $H_1=(V,E_1)$.} \label{H_1}
\end{center}
\end{figure}
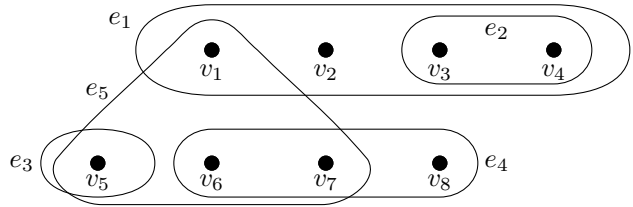
\end{exa}

\subsection{Structured optimal transport} \label{SOT}

An approach to the optimal transport problem incorporating the structure of the ground space was proposed by \cite{AJJ}.
Here we briefly explain their construction for comparison with ours in the next subsection.
They attempted to derive class-coherent matches by using submodular costs as follows.
First, we define a submodular function.

\begin{defi}[Submodular function]
Let $V$ be a discrete set and $F:\01^V\to\R$.
$F$ is said to be \emph{submodular} if 
\begin{align} 
F(S)+F(T)\geq F(S\cup T) + F(S\cap T) \label{submodular-1}
\end{align}
holds for any $S,T\subset V$.
Moreover, we say that $F$ is \emph{monotone nondecreasing} if $F(S)\leq F(T)$ holds for any $S\subset T\subset V$ 
and $F$ is \emph{normalized} if $F$ satisfies $F(\emptyset)=0$.
\end{defi}

\eqref{submodular-1} is equivalent to 
\begin{align} 
F\big(S\cup\{v\}\big)-F(S) \geq F\big(T\cup\{v\}\big)-F(T) \label{submodular-2}
\end{align}
for any $S\subset T\subset V$ and any $v\in V\backslash T$.
This indicates that a submodular function has the decreasing property.
In other words, \eqref{submodular-2} implies that the cost of an additional contract $v$ over $T$ is not higher than that over $S$.

Let $(V,d)$ be a discrete metric space and $\mu,\nu\in\PP_c(V)$.
First, we decompose $\supp\mu,\supp\nu$ as $\supp\mu=\bigsqcup_k A_k,\supp\nu=\bigsqcup_l B_l$, respectively, and set $E_{kl}$ to be the set of all matchings between $A_k,B_l$: 
\begin{align*}
E_{kl} \:= \big\{ (x,y)\in V\times V \;\big|\; x\in A_k,y\in B_l \big\}.
\end{align*}
By using a submodular function $F_{kl}:\01^{V\times V}\to\R$ with support $E_{kl}$, define $F:\01^{V\times V}\to\R$ as
\begin{align}
F(S)\:=\sum_{k,l}F_{kl}(S\cap E_{kl}) \label{F}
\end{align}
for $S\subset V\times V$.
An example of a submodular function $F_{kl}$ is a function defined as
\begin{align}
F_{kl}(S) \:= g_{kl}\left( \sum_{(x,y)\in S\cap E_{kl}} d(x,y) \right) \label{submodular constructing}
\end{align}
with a monotone nondecreasing concave function $g_{kl}:\R\to\R$.
The sum of submodular functions is also a submodular function, so $F$ in \eqref{F} is a submodular cost.
Then, \cite{AJJ} defined the following.

\begin{defi}[\cite{AJJ}] \label{submodular cost}
Let $(V,d)$ be a discrete metric space and $F:\01^{V\times V}\to[0,\infty)$ be a normalized monotone nondecreasing submodular function.
Suppose that $F$=0 on the diagonal set and $F>0$ otherwise.
Define the transport cost $d_F(\mu,\nu)$ from $\mu$ to $\nu$ as
\begin{align*}
d_F(\mu,\nu) \:= \min_{\pi\in\Pi(\mu,\nu)} f(\pi), 
\end{align*}
where $f$ is the Lov\'{a}sz extension of $F$.
\end{defi}

The submodularity of the set-function $F$ is equivalent to the convexity of its Lov\'{a}sz extension, 
thus the above definition is useful for calculation since the value of $f$ can be replaced by a maximization problem on a bounded closed set.
On the other hand, while $d_F$ is a semi-metric on $\PP(V)$ (\cite[Lemma 3.1]{AJJ}), it is not a distance function in general.
In fact, it is easy to construct examples in which the triangle inequality does not hold.
For instance, letting $V=\{v_1,v_2,v_3\}$, we can construct $F:\01^{V\times V}\to\R$ such that
\begin{align*}
d_F(\delta_{v_1},\delta_{v_3})>d_F(\delta_{v_1},\delta_{v_2})+d_F(\delta_{v_2},\delta_{v_3}).
\end{align*}
First, $F\big(\{(v_i,v_i)\}\big)=0$ holds for any $i\in[3]$ by assumption.
Moreover, denoting the $(j,k)$-matrix unit by $E_{jk}$, we have
\begin{align*}
d_F(\delta_{v_j},\delta_{v_k})=f(E_{jk})=F\big(\{(v_j,v_k)\}\big)
\end{align*}
for any $j,k\in[3]$ $(j\neq k)$ since $\Pi(\delta_{v_j},\delta_{v_k})=\{E_{jk}\}$ holds.
Therefore, if we take $F\big(\{(v_1,v_2)\}\big)=F\big(\{(v_2,v_3)\}\big)=1$ and $F\big(\{(v_1,v_3)\}\big)=2+\eps$ $(\eps>0)$, 
then we can construct a desired set-function $F$ by determining the other values of $F$ so that $F$ is a normalized monotone nondecreasing submodular function.

\subsection{New transport distance} \label{idea}

We shall introduce a new transport cost on hypergraphs utilizing a concave function in a different way from \eqref{submodular constructing}.
We remark that hyperedges are not disjoint, so that one cannot simply apply the construction in the previous subsection.
Instead, our idea is to discount the cost when the transport is done within a hyperedge.

We consider a concave function satisfying the following conditions to discount the cost.

\begin{ass} \label{h}
Assume that $h:[0,1]\to\mathbb{R}_{\geq0}$ is a monotone nondecreasing continuous concave function satisfying the following conditions:
\begin{itemize}
\item $h(0)=0$,
\item $\dis \lim_{\la\dto0} \frac{h(\la)-h(0)}{\la-0}\;\left(=\lim_{\la\dto0} \frac{h(\la)}{\la}\right)\in(0,\infty)$.
\end{itemize}
For simplicity, we set
\begin{align*} 
h^\prime(0) \:= \lim_{\la\dto0} \frac{h(\la)}{\la} ,\quad h^\prime(1) \:= \lim_{\la\uto1}\frac{h(1)-h(\la)}{1-\la}.
\end{align*}
\end{ass}

\begin{exa} \label{h-exa}
Let $a$ be a positive constant.
The linear function $h(\la)=a\la$ obviously satisfies \autoref{h}.
Other example are, e.g.,
\begin{itemize}
\item $\log$-type function: $h(\la)=a\log(1+\la)$,
\item truncation function: $h(\la)=\min\{a,\la\}$.
\end{itemize}
In the truncation function $h$, the cost is proportional to the amount of transportation until it reaches some fixed value $a$, 
and no further cost is required when it exceeds $a$.
By combining these examples, we can also consider
\begin{align*}
h(\la)=\min\{a,\la\}+\log{\big(1+|\la-a|_+\big)},
\end{align*}
where $|x|_+=\max\{x,0\}$.
\end{exa}

\begin{rem}
The power function $h(\lambda)=\lambda^a$ ($a\in(0,1)$) is concave but the limit $\lim_{\la\dto0} h(\la)/\la$ diverges.
Thus it does not satisfy \autoref{h}, though our disscusion also applies to some extent.
\end{rem}

We obtain $h^\prime(1) \leq h(1) \leq h^\prime(0)$ by the concavity of $h$.
If equality holds in one of these inequalities, then $h$ is linear.

Now we introduce a new distance between probability measures with bounded support using such a concave function $h$ and the $L^1$-Wasserstein distance $W_1$.

\begin{defi} \label{W}
Let $H=(V,E)$ be a hypergraph.
For $\mu,\nu\in\PP_c(V)$, we define $W_h:\PP_c(V)\times\PP_c(V)\to\mathbb{R}_{\geq0}$ as
\begin{align}
W_h(\mu,\nu) \:=\inf \left\{\sum\limits_{i=1}^I h \big( W_1( \xi_{i-1},\xi_i ) \big) \;\middle|\; \xi=(\xi_i)_{i\in[I]_0} \in P(\mu,\nu) \right\} , \label{Wh-def}
\end{align}
where $P(\mu,\nu)$ is the set of sequences of probability measures on $V$ with $\xi_0=\mu$ and $\xi_I=\nu$ 
such that $\supp(\xi_i-\xi_{i-1})$ is included in a hyperedge at each $i\in [I]$.
\end{defi}

By definition, $W_h(\mu,\nu)$ coincides with $W_1(\mu,\nu)$ when $h(t)=t$.

\begin{prop} \label{mini}
The minimum value on the right-hand side of \eqref{Wh-def} is actually attained.
\end{prop}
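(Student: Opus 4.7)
The plan is to establish existence of a minimizer via a priori control on both the supports and the length of near-minimizing sequences, followed by a standard compactness argument.

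First I would confirm $W_h(\mu,\nu) < \infty$ by exhibiting any explicit valid sequence: connectedness of $H$ lets one move mass along a hyperedge-path from $\mu$ to $\nu$, giving an upper bound $C_0 \geq W_h(\mu, \nu)$, and I restrict to $\xi \in P(\mu, \nu)$ with total cost at most $C_0$. Each single step satisfies $W_1(\xi_{i-1}, \xi_i) \leq 1$, since $\supp(\xi_i - \xi_{i-1})$ lies in one hyperedge whose vertices are pairwise at graph distance $1$. Concavity of $h$ with $h(0) = 0$ gives the chord bound $h(\lambda) \geq h(1)\lambda$ on $[0,1]$, so
\[
\sum_{i=1}^{I} W_1(\xi_{i-1}, \xi_i) \leq \frac{C_0}{h(1)}.
\]
By the triangle inequality each $\xi_i$ satisfies $W_1(\xi_i, \mu) \leq C_0/h(1)$, which, together with the local finiteness of $H$, yields uniform tightness of the family $\{\xi_i\}$ on $V$.

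Next I would control the length $I$. From $h(0) = 0$ and concavity, $h$ is subadditive, so any two consecutive steps that use the same hyperedge may be merged without increasing cost, and trivial steps $\xi_i = \xi_{i-1}$ may be deleted. After these reductions consecutive hyperedges differ, and I would derive a uniform bound $N$ on the length of an optimal sequence by a structural analysis of the admissible moves $\eta_i = \xi_i - \xi_{i-1}$, which lie in the finite union of low-dimensional affine subspaces $\{\eta \in \R^V : \supp\eta \subset e,\ \sum\eta = 0\}$ for the hyperedges $e$ meeting the relevant (finite) support, subject to the positivity constraints on the partial sums $\xi_j$.

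With $I \leq N$ fixed and each choice of hyperedges $(e_1,\dots,e_I) \in E^I$, a minimizing subsequence admits a weakly convergent limit by Prokhorov (from the tightness above). The closed support conditions $\supp(\xi_i - \xi_{i-1}) \subset e_i$ and the boundary conditions pass to the weak limit, and the cost functional is lower semicontinuous (since $W_1$ is weakly lower semicontinuous on a tight family and $h$ is continuous and nondecreasing). Hence the infimum is attained within each of the finitely many configurations, and therefore globally.

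The main obstacle is the length bound: subadditivity cleanly handles adjacent repeats of a hyperedge, but non-adjacent repeats cannot in general be eliminated by reordering the $\eta_i$, because such reorderings may break the positivity of some intermediate $\xi_j$. A clean workaround avoiding an explicit length bound is to embed each element of a minimizing sequence in $\PP_c(V)^{\N}$ by padding with $\nu$, extract a subsequential limit via Tychonoff together with a diagonal extraction over the (finite set of) relevant hyperedge labels, and pass to the limit via Fatou's lemma for the series $\sum_i h(W_1(\xi_{i-1},\xi_i))$; the summability $\sum_i W_1 < \infty$ inherited by the limit then forces all but finitely many steps to be trivial, yielding a finite attaining sequence, although verifying that the limit actually terminates at $\nu$ at a finite index requires additional care.
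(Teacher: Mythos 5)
There is a genuine gap, and you have in fact located it yourself: neither of your two routes closes the step that the whole proposition hinges on, namely controlling the number of nontrivial steps $I$ (equivalently, ensuring that a limit of minimizing paths actually terminates at $\nu$). Your preliminary estimates are fine --- $W_1(\xi_{i-1},\xi_i)\le 1$ because $\supp(\xi_i-\xi_{i-1})$ sits in one hyperedge, the chord bound $h(\lambda)\ge h(1)\lambda$ gives $\sum_i W_1(\xi_{i-1},\xi_i)\le C_0/h(1)$ and hence tightness, and subadditivity of $h$ lets you merge adjacent steps in the same hyperedge. But the ``structural analysis'' you invoke to get a uniform bound $N$ is precisely the hard part and is not supplied; and your Tychonoff--Fatou fallback fails at exactly the point you flag. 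Since $h'(0)<\infty$, a path may be split into many vanishingly small steps at only a bounded multiplicative overhead (of order $h'(0)/h(1)$), so a minimizing sequence can a priori have $I_n\to\infty$ with the early steps shrinking; after padding with $\nu$, the componentwise limit of such a sequence can be the constant path $\xi_i^\infty\equiv\mu$ (or more generally a path whose $W_1$-limit is not $\nu$), because the tails $\sum_{j>i}W_1\bigl(\xi_{j-1}^{(n)},\xi_j^{(n)}\bigr)$ need not be small uniformly in $n$. Fatou then gives you an infinite path of cost at most the infimum that does not connect $\mu$ to $\nu$, which proves nothing.

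The paper closes this gap by a different mechanism, which you may want to compare with. It fixes a coupling $\pi\in\Pi(\mu,\nu)$ and considers only paths associated with $\pi$ via intermediate couplings $(\pi_i)$. \autoref{trans path} then shows, by an extreme-point/concavity argument in the couplings (the cost is concave along the perturbation parameter $s$, so it is minimized at an endpoint where one of two competing transport paths is destroyed), that one may assume every pair $(x,y)$ with $\pi(\{(x,y)\})>0$ has a \emph{unique} transport path, and that \eqref{TransPathIneq} holds: each step along that path carries mass at least $\pi(\{(x,y)\})$. This is what rules out the degeneration described above --- every nontrivial step moves a quantum of mass bounded below, so the total-cost bound caps the number of nontrivial steps --- and reduces the minimization to a compact set, first for fixed $\pi$ and then over the compact set $\Pi(\mu,\nu)$. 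Some ingredient playing this role (a lower bound on the mass moved per step, or a genuinely uniform tail estimate) is indispensable; without it your argument establishes only that the infimum is approached, not attained.
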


This is not straightforward since $W_h$ is defined by using the sum of the transport cost of each step.
To show \autoref{mini}, we need to have a close look on the behavior of transports with respect to $W_h$.
To this end, we introduce the following notions.

\begin{defi}
For $\mu,\nu\in\PP_c(V)$, $\xi=(\xi_i)_{i\in[I]_0}\in P(\mu,\nu)$ and $\pi\in\Pi(\mu,\nu)$, 
we define the following:
\begin{itemize}
\item $\big( \xi,(\pi_i)_{i\in[I]} \big)$ is \emph{associated} with $\pi$ 
if for all $i\in[I]$, $\pi_i\in\Pi(\xi_{i-1},\xi_i)$ and $\pi=\pi_I\pi_{I-1}\cdots\pi_1$ (i.e., $\pi$ coincides with the gluing of $\pi_1,\ldots,\pi_I$).
Moreover, we denote by $C(\pi_i) \:= \sum_{x,y\in V}d(x,y)\pi_i(x,y)$ the transport cost of $\pi_i$, and 
define $C\big((\pi_i)_{i\in[I]}\big) \:= \sum_{i\in[I]}h\big(C(\pi_i)\big)$.
\item We set $P(\mu,\nu;\pi) \:= \Big\{ \xi\in P(\mu,\nu) \;\Big|\; \big(\xi,(\pi_i)_{i\in[I]}\big)
\text{ is associated with } \pi \text{ for some } (\pi_i)_{i\in[I]} \Big\}$.
\item Let $\big( \xi,(\pi_i)_{i\in[I]} \big)$ be associated with $\pi$.
For $x,y\in V$ with $\pi\big(\{(x,y)\}\big)>0$, 
we say that $\{x_i\}_{i\in[I]_0}$ is a \emph{transport path} from $x$ to $y$ 
if $x_0=x$, $x_I=y$ and $\pi_i\big(\{(x_{i-1},x_i)\}\big)>0$ holds for any $i\in[I]$.
\end{itemize}
\end{defi}

We remark that optimal couplings are not unique for $W_1$, and hence $\pi$ is not determined from $\xi$.

\begin{lem} \label{trans path}
Given a path $\big(\xi=(\xi_i)_{i\in[I]_0},(\pi_i)_{i\in[I]}\big)$ associated with $\pi$, 
we can modify it into $\Big(\bar{\xi}=\big(\bar{\xi}_i\big)_{i\in[I]_0},(\bar{\pi}_i)_{i\in[I]}\Big)$ associated with $\pi$ 
whose transport cost is not greater than that of $\big( \xi,(\pi_i)_{i\in[I]} \big)$ such that any $x,y\in V$ with $\pi\big(\{(x,y)\}\big)>0$ have a unique transport path.
In particular, for any $i\in[I]$, it holds that:
\begin{align}
\bar{\pi}_i \big(\{(x_{i-1},x_i)\}\big) \geq \pi\big(\{(x,y)\}\big). \label{TransPathIneq}
\end{align}
\end{lem}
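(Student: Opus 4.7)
The plan is to perform the modification by iterative two-path merging, using the observation that in any associated structure the intermediate vertices on the transport paths of each endpoint pair are \emph{dedicated} to that pair. More precisely, at each intermediate vertex $a$ at time $i$, the Markov kernel $\pi_{i+1}(a,\cdot)/\xi_i(a)$ describing the outgoing distribution of mass does not depend on the source of the incoming mass; combining this forward and backward, any intermediate vertex $a$ visited by a transport path of $(x,y)$ cannot be visited at the same time $i$ by a transport path of any other endpoint pair, as otherwise the gluing $\pi=\pi_I\cdots\pi_1$ would produce spurious mass at some $(x',y)$ or $(x,y')$ with $\pi(\{(x',y)\})=0$ or $\pi(\{(x,y')\})=0$. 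This decoupling lets me reduce the problem to local two-path surgeries which do not interfere with other endpoint pairs.

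Given this observation, I pick an endpoint pair $(x,y)$ with two distinct transport paths $P_1=(x_i^{(1)})_{i\in[I]_0}$ and $P_2=(x_i^{(2)})_{i\in[I]_0}$ carrying positive masses $m_1,m_2$ under the joint path law associated with $(\pi_i)$, and replace $\pi_i$ by
\[
\tilde{\pi}_i \:= \pi_i + m_2\bigl(\delta_{(x_{i-1}^{(1)},\,x_i^{(1)})} - \delta_{(x_{i-1}^{(2)},\,x_i^{(2)})}\bigr)
\]
(merging $P_2$ into $P_1$), or by the symmetric version with the roles of $P_1,P_2$ swapped. By the dedication observation, the resulting $(\tilde{\xi},(\tilde{\pi}_i))$ is still associated with $\pi$, and the hyperedge-support condition $\supp(\tilde{\xi}_i-\tilde{\xi}_{i-1})\subseteq e_i$ persists because $\supp\tilde{\pi}_i\subseteq\supp\pi_i$. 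Each merge strictly decreases the total number of transport paths with positive mass, so after finitely many iterations every endpoint pair has a unique transport path, and \eqref{TransPathIneq} follows immediately since the whole mass $\pi(\{(x,y)\})$ now flows along the one chosen path.

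The main obstacle is to argue that each merge can be carried out without increasing the transport cost $\sum_i h(C(\pi_i))$. The change in the $i$-th step cost under the merge $P_2\to P_1$ is $\Delta_i \:= m_2\bigl(d(x_{i-1}^{(1)},x_i^{(1)}) - d(x_{i-1}^{(2)},x_i^{(2)})\bigr)$, and the tangent inequality for concave $h$ gives
\[
\sum_{i}\bigl[h\bigl(C(\pi_i)+\Delta_i\bigr)-h\bigl(C(\pi_i)\bigr)\bigr] \leq \sum_{i} h'(C(\pi_i))\,\Delta_i = m_2\sum_{i} h'(C(\pi_i))\bigl(d(x_{i-1}^{(1)},x_i^{(1)}) - d(x_{i-1}^{(2)},x_i^{(2)})\bigr).
\]
The opposite merge direction produces the same right-hand side multiplied by $-m_1/m_2$, hence of opposite sign, so at least one of the two directions makes this upper bound non-positive, yielding $\sum_i h(C(\tilde{\pi}_i)) \leq \sum_i h(C(\pi_i))$ at each merge. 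Iterating then gives the desired inequality $\sum_i h(C(\bar{\pi}_i)) \leq \sum_i h(C(\pi_i))$. The judicious use of concavity to pick the ``right'' merge direction, combined with the dedication observation that keeps the surgery strictly local, is in my view the technical crux of the proof.
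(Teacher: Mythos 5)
Your core mechanism --- resolving two distinct transport paths of a single endpoint pair by shifting mass from one onto the other, and using concavity of $h$ to argue that at least one of the two shift directions does not increase $\sum_i h(C(\pi_i))$ --- is exactly the paper's strategy (the paper phrases it slightly differently: the total cost is a concave function of the shift parameter $s$ on an interval containing $0$, hence attains its minimum at an endpoint; your tangent-inequality version is equivalent in effect, except that it uses $h'(C(\pi_i))$, which \autoref{h} does not provide since $h$ is only assumed concave and continuous --- the endpoint argument avoids this). However, two steps do not stand. First, the ``dedication observation'' is false: in the gluing $\pi=\pi_I\cdots\pi_1$, mass originating from different sources routinely mixes at a common intermediate vertex, and whatever mass the gluing then assigns to $(x',y)$ simply \emph{is} $\pi(\{(x',y)\})$; nothing forces it to vanish, so the contradiction you invoke is circular. (Take $\mu=\tfrac12(\delta_x+\delta_{x'})$, route both atoms through a single vertex $a$ and split them again: all four endpoint pairs receive positive mass and all their transport paths pass through $a$ at the same time.) Since you lean on this observation both to keep the surgery local and to conclude that the modified family is still associated with $\pi$, that portion of the argument is unsupported; the paper makes no such disjointness claim and works directly with the edge masses.

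Second, transferring the path-law mass $m_2$ does not guarantee that any transport path disappears. A transport path is defined solely by positivity of the edge masses $\pi_i(\{(x_{i-1},x_i)\})$ together with $\pi(\{(x,y)\})>0$, and an edge of $P_2$ may be shared by other transport paths (of the same pair or of other pairs); after subtracting $m_2$ every edge of $P_2$ can therefore still carry positive mass, so $P_2$ remains a transport path and your claim that each merge strictly decreases the number of transport paths fails, leaving the iteration without a termination argument. The paper calibrates the transfer amount as $t_1=\min_i\pi_i\big(\{(x_{i-1}^1,x_i^1)\}\big)$ over the indices where the two paths differ (and symmetrically $t_2$ for the opposite direction), which is exactly enough to annihilate one edge of one path --- this choice, not the path-law mass, is what makes the induction close.
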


\begin{proof}
Suppose that $\big\{x_i^1\big\}_{i\in[I]_0}$ and $\big\{x_i^2\big\}_{i\in[I]_0}$ are 
distinct transport paths between $x,y$.
For $j=1,2$, we define 
\begin{align*}
t_j \:= \min\bigg\{ \pi_i \Big(\big\{\big(x_{i-1}^j,x_i^j\big)\big\}\Big) \;\bigg|\; 
i\in[I] \text{ such that } \big(x_{i-1}^1,x_i^1\big) \neq \big(x_{i-1}^2,x_i^2\big)\bigg\}.
\end{align*}
Now, let $s\in[-t_2,t_1]$ and for $i\in[I]$, we define $\pi_i^s\in\PP_c(V\times V)$ as
\begin{center}
\[ \dis \pi_i^s\big(\{(x_{i-1},x_i)\}\big) \:=
\begin{cases}
\pi_i\big(x_{i-1}^j,x_i^j\big)+(-1)^j s & \quad \text{if } (x_{i-1},x_i)=\big(x_{i-1}^j,x_i^j\big)\neq\big(x_{i-1}^{3-j},x_i^{3-j}\big) , \vspace{1mm} \\
\pi_i\big(x_{i-1}^j,x_i^j\big) & \quad \text{otherwise}
\end{cases}\]
\end{center} 
for $j=1,2$.
Then, $\pi_I^s\pi_{I-1}^s\cdots\pi_1^s=\pi$ holds, and note that $\pi_i^0=\pi_i$.
Let $\xi^s=(\xi_i^s)_{i\in[I]_0}$ be the path corresponding to $(\pi_i^s)_{i\in[I]}$ and observe that 
$C(\pi_i^s)$ is affine in $s$ for each $i\in[I]$.
Hence, $h\big(C(\pi_i^s)\big)$ is concave for each $i\in[I]$, and 
$C\big((\pi_i^s)_{i\in[I]}\big)=\sum_{i\in[I]}h\big(C(\pi_i^s)\big)$ is also concave.
Then $C\big((\pi_i^s)_{i\in[I]}\big)$ attains its minimum at $s=-t_2$ or $s=t_1$.
If we take $s=t_1$, $\big\{x_i^1\big\}_{i\in[I]_0}$ is no longer a transport path between $x,y$.
We define $(\bar{\pi}_i)\:=\big(\pi_i^{t_1}\big)$ and take $\bar{\xi}$ corresponding to this $(\bar{\pi}_i)$.
By iterating this procedure if necessary, the first half of the statement holds.

Furthermore, the transport path is unique for $(\bar{\pi}_i)_{i\in[I]}$ defined in this way, 
and each entry of each $\bar{\pi}_i$ is between $0$ and $1$, which implies that \eqref{TransPathIneq} holds.
\end{proof}

\begin{proof}[Proof of \autoref{mini}]
Take $\pi\in\Pi(\mu,\nu)$ and fix it.
By \autoref{trans path}, 
we can transform any $\big(\xi=(\xi_i)_{i\in[I]_0},(\pi_i)_{i\in[I]}\big)$ associated with $\pi$ into 
$\Big(\bar{\xi}=\big(\bar{\xi}_i\big)_{i\in[I]_0},(\bar{\pi}_i)_{i\in[I]}\Big)$, which has a unique transport path 
for any $x,y\in V$ with $\pi\big(\{(x,y)\}\big)>0$, without increasing the cost.
Then, since \eqref{TransPathIneq} holds, we can restrict the range of the infimum of 
\begin{align*}
W_h(\mu,\nu;\pi) &\:= \inf \left\{ \sum_{i=1}^I h\big(W_1(\xi_{i-1},\xi_i)\big) \;\middle|\; 
\xi=(\xi_i)_{i\in[I]_0}\in P(\mu,\nu;\pi) \right\} \\
&= \inf \Big\{ C\big((\pi_i)_{i\in[I]}\big) \;\Big|\; \big(\xi,(\pi)_{i\in[I]}\big) 
\text{ associated with $\pi$, having unique transport paths} \Big\}
\end{align*}
to a bounded set, and there exists a minimizer $\xi^\pi$.
Since $W_h(\mu,\nu;\pi)$ depends continuously on $\pi\in\Pi(\mu,\nu)$ and $\Pi(\mu,\nu)$ is a bounded set, $W_h(\mu,\nu)=\inf_{\pi\in\Pi(\mu,\nu)}W_h(\mu,\nu;\pi)$ holds and 
there exists $\pi$ that attains the minimum of $W_h(\mu,\nu;\pi)$.
For this $\pi$, $\xi^\pi$ constructed as above is a shortest path of $W_h(\mu,\nu)$.
\end{proof}

\begin{defi} \label{W-dist}
We call a sequence of probability measures that achieves the minimum value on the right-hand side of (\ref{Wh-def}) a \emph{shortest path of $W_h(\mu,\nu)$}.
\end{defi}
Note that, as with $W_1$, a shortest path of $W_h(\mu,\nu)$ is not uniquely determined in general.

\begin{prop}
$W_h$ defined above is a distance function on $\PP_c(V)$.
\end{prop}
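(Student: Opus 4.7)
The plan is to verify the four axioms of a distance function---nonnegativity, vanishing exactly on the diagonal, symmetry, and the triangle inequality---for $W_h$ on $\PP_c(V)$. Three of these are essentially immediate. Nonnegativity follows from $h\geq 0$; the equality $W_h(\mu,\mu)=0$ is witnessed by the trivial sequence with $I=0$, for which the sum over $[I]=\emptyset$ is empty; symmetry $W_h(\mu,\nu)=W_h(\nu,\mu)$ follows by reversing any sequence $\xi\in P(\mu,\nu)$ to obtain one in $P(\nu,\mu)$ with identical cost, using the symmetry of $W_1$; and the triangle inequality $W_h(\mu,\la)\leq W_h(\mu,\nu)+W_h(\nu,\la)$ follows by concatenating shortest paths in $P(\mu,\nu)$ and $P(\nu,\la)$, whose existence is guaranteed by \autoref{mini}, into a sequence in $P(\mu,\la)$ whose cost equals the sum.

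The nontrivial step is positivity off the diagonal: $W_h(\mu,\nu)>0$ whenever $\mu\neq\nu$. My plan is to prove the quantitative comparison
\begin{align*}
W_h(\mu,\nu)\geq h(1)\,W_1(\mu,\nu),
\end{align*}
where $h(1)>0$ under \autoref{h} (since $h^\prime(0)>0$ forces $h$ to be strictly positive near $0$, and monotonicity then gives $h(1)>0$). Concavity of $h$ with $h(0)=0$ yields the chord estimate $h(t)\geq t\,h(1)$ for every $t\in[0,1]$. For any admissible $\xi=(\xi_i)_{i\in[I]_0}\in P(\mu,\nu)$, the signed measure $\xi_i-\xi_{i-1}$ is supported in a single hyperedge $e$, so each unit of mass is displaced by at most the graph-distance diameter of $e$, which equals $1$ by \autoref{defs of hypers}; hence $W_1(\xi_{i-1},\xi_i)\in[0,1]$ for every $i$. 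Summing the chord estimate and using the triangle inequality for $W_1$,
\begin{align*}
\sum_{i=1}^I h\big(W_1(\xi_{i-1},\xi_i)\big)\geq h(1)\sum_{i=1}^I W_1(\xi_{i-1},\xi_i)\geq h(1)\,W_1(\mu,\nu),
\end{align*}
and taking the infimum over $\xi$ yields the comparison, which is positive since $W_1$ is a distance on $\PP_c(V)$.

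The main subtlety I expect to watch is precisely the uniform bound $W_1(\xi_{i-1},\xi_i)\leq 1$: the chord estimate $h(t)\geq t\,h(1)$ is valid only on the closed interval $[0,1]$, and it is exactly the single-hyperedge support condition built into $P(\mu,\nu)$ that confines each step to this range. Without that constraint the concave discount $h$ could collapse costs and invalidate the comparison with $W_1$, so the hyperedge-supported increment condition in \autoref{W} is doing essential work in separating points.
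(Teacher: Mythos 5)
Your proof is correct and, on the only axiom the paper actually verifies (the triangle inequality), it takes exactly the paper's route of concatenating shortest paths of $W_h(\mu_1,\mu_2)$ and $W_h(\mu_2,\mu_3)$. The one substantive addition is your nondegeneracy argument via the bound $W_h(\mu,\nu)\geq h(1)\,W_1(\mu,\nu)$ (resting on the chord estimate $h(t)\geq t\,h(1)$ on $[0,1]$ and the single-hyperedge confinement of each step); the paper omits this point here but establishes precisely that inequality later as \eqref{W and h(1)W_1}, so your reasoning fills a small gap in the paper's presentation rather than departing from it.
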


\begin{proof}
We only observe the triangle inequality.
For any $\mu_1,\mu_2,\mu_3\in\PP_c(V)$, 
we obtain a path connecting $\mu_1$ and $\mu_3$ by concatenating shortest paths of $W_h(\mu_1,\mu_2)$ and $W_h(\mu_2,\mu_3)$.
Therefore, $W_h(\mu_1,\mu_3) \leq W_h(\mu_1,\mu_2) + W_h(\mu_2,\mu_3)$ holds.
\end{proof}

We emphasize that this transport distance $W_h$ is new even in the case of graphs.
In fact, by strengthening the concavity of $h$, shortest paths with respect to $W_1$ and $W_h$ may be different.

\begin{exa}
Note that in the example of \autoref{Intro}, the transport illustrated in \autoref{this transport} attains the minimum cost.
For instance, let us compare it with another transport illustrated in \autoref{AM-GM}.
We transport in the four hyperedges in the same order as \autoref{this transport} (cyan, orange, green, red), 
however, the weights on $v_1$ and $v_2$ are decomposed to make the last transport is done only for $x\to y$.
In this case, the weights transported in the cyan, orange and green hyperedges 
are all $5\B/24$, so the total cost of this transport is greater than or equal to the cost of the transport described 
in \autoref{this transport} by the concavity of $h$.
We remark that, for $W_1$, the both transports have the same costs.

\begin{figure}[H]
\begin{tabular}{ccccccc}
\begin{minipage}{0.22\hsize}
\begin{center}
\begin{tikzpicture}
\node (v1) at (0,0) {};
\node (v2) at (1.2,0) {};
\node (v3) at (2.4,0) {};
\node (v4) at (0,1.2) {};
\node (v5) at (1.2,1.2) {};
\node (v6) at (2.4,1.2) {};
\node (v7) at (0,2.4) {};
\node (v8) at (1.2,2.4) {};
\node (v9) at (2.4,2.4) {};
\foreach \v in {1,2,...,9} {\fill (v\v) circle (0.1);} 
\path[draw=red, dashed] ($(v4) + (-0.5,0)$) 
to[out=90,in=270] ($(v7) + (-0.5,0)$)
to[out=90,in=180] ($(v7) + (0,0.5)$)
to[out=0,in=180] ($(v8) + (0,0.5)$)
to[out=0,in=90] ($(v8) + (0.5,0)$)
to[out=270,in=90] ($(v5) + (0.5,0)$)
to[out=270,in=0] ($(v5) + (0,-0.5)$)
to[out=180,in=0] ($(v4) + (0,-0.5)$)
to[out=180,in=270] ($(v4) + (-0.5,0)$);
\path[draw=green, dashed] ($(v5) + (-0.4,0)$) 
to[out=90,in=270] ($(v8) + (-0.4,0)$)
to[out=90,in=180] ($(v8) + (0,0.4)$)
to[out=0,in=180] ($(v9) + (0,0.4)$)
to[out=0,in=90] ($(v9) + (0.4,0)$)
to[out=270,in=90] ($(v6) + (0.4,0)$)
to[out=270,in=0] ($(v6) + (0,-0.4)$)
to[out=180,in=0] ($(v5) + (0,-0.4)$)
to[out=180,in=270] ($(v5) + (-0.4,0)$);
\path[draw=orange, dashed] ($(v1) + (-0.4,0)$) 
to[out=90,in=270] ($(v4) + (-0.4,0)$)
to[out=90,in=180] ($(v4) + (0,0.4)$)
to[out=0,in=180] ($(v5) + (0,0.4)$)
to[out=0,in=90] ($(v5) + (0.4,0)$)
to[out=270,in=90] ($(v2) + (0.4,0)$)
to[out=270,in=0] ($(v2) + (0,-0.4)$)
to[out=180,in=0] ($(v1) + (0,-0.4)$)
to[out=180,in=270] ($(v1) + (-0.4,0)$);
\path[draw=cyan, very thick] ($(v2) + (-0.5,0)$) 
to[out=90,in=270] ($(v5) + (-0.5,0)$)
to[out=90,in=180] ($(v5) + (0,0.5)$)
to[out=0,in=180] ($(v6) + (0,0.5)$)
to[out=0,in=90] ($(v6) + (0.5,0)$)
to[out=270,in=90] ($(v3) + (0.5,0)$)
to[out=270,in=0] ($(v3) + (0,-0.5)$)
to[out=180,in=0] ($(v2) + (0,-0.5)$)
to[out=180,in=270] ($(v2) + (-0.5,0)$);
\draw[arrows=->, ultra thick, draw=cyan] ($(v3)+(-0.15,0.15)$) to ($(v5)+(0.15,-0.15)$);
\draw[arrows=->, ultra thick, draw=cyan] ($(v2)+(0,0.2)$) to ($(v5)+(0,-0.2)$);
\draw[arrows=->, ultra thick, draw=cyan] ($(v6)+(-0.2,0)$) to ($(v5)+(0.2,0)$);
\node at ($(v2) + (-0.2,-0.2)$) {$v_1$};
\node at ($(v6) + (0.2,0.2)$) {$v_2$};
\node at ($(v2) + (-0.25,0.35)$) {$\maru{1}$};
\node at ($(v6) + (-0.35,0.2)$) {$\maru{1}$};
\end{tikzpicture}
\end{center}
\end{minipage} 
\begin{minipage}{0.03\hsize}
\begin{center}
\end{center}
\end{minipage}
\begin{minipage}{0.22\hsize}
\begin{center}
\begin{tikzpicture}
\node (v1) at (0,0) {};
\node (v2) at (1.2,0) {};
\node (v3) at (2.4,0) {};
\node (v4) at (0,1.2) {};
\node (v5) at (1.2,1.2) {};
\node (v6) at (2.4,1.2) {};
\node (v7) at (0,2.4) {};
\node (v8) at (1.2,2.4) {};
\node (v9) at (2.4,2.4) {};
\foreach \v in {1,2,...,9} {\fill (v\v) circle (0.1);} 
\path[draw=red, dashed] ($(v4) + (-0.5,0)$) 
to[out=90,in=270] ($(v7) + (-0.5,0)$)
to[out=90,in=180] ($(v7) + (0,0.5)$)
to[out=0,in=180] ($(v8) + (0,0.5)$)
to[out=0,in=90] ($(v8) + (0.5,0)$)
to[out=270,in=90] ($(v5) + (0.5,0)$)
to[out=270,in=0] ($(v5) + (0,-0.5)$)
to[out=180,in=0] ($(v4) + (0,-0.5)$)
to[out=180,in=270] ($(v4) + (-0.5,0)$);
\path[draw=green, dashed] ($(v5) + (-0.4,0)$) 
to[out=90,in=270] ($(v8) + (-0.4,0)$)
to[out=90,in=180] ($(v8) + (0,0.4)$)
to[out=0,in=180] ($(v9) + (0,0.4)$)
to[out=0,in=90] ($(v9) + (0.4,0)$)
to[out=270,in=90] ($(v6) + (0.4,0)$)
to[out=270,in=0] ($(v6) + (0,-0.4)$)
to[out=180,in=0] ($(v5) + (0,-0.4)$)
to[out=180,in=270] ($(v5) + (-0.4,0)$);
\path[draw=orange, very thick] ($(v1) + (-0.4,0)$) 
to[out=90,in=270] ($(v4) + (-0.4,0)$)
to[out=90,in=180] ($(v4) + (0,0.4)$)
to[out=0,in=180] ($(v5) + (0,0.4)$)
to[out=0,in=90] ($(v5) + (0.4,0)$)
to[out=270,in=90] ($(v2) + (0.4,0)$)
to[out=270,in=0] ($(v2) + (0,-0.4)$)
to[out=180,in=0] ($(v1) + (0,-0.4)$)
to[out=180,in=270] ($(v1) + (-0.4,0)$);
\path[draw=cyan, dashed] ($(v2) + (-0.5,0)$) 
to[out=90,in=270] ($(v5) + (-0.5,0)$)
to[out=90,in=180] ($(v5) + (0,0.5)$)
to[out=0,in=180] ($(v6) + (0,0.5)$)
to[out=0,in=90] ($(v6) + (0.5,0)$)
to[out=270,in=90] ($(v3) + (0.5,0)$)
to[out=270,in=0] ($(v3) + (0,-0.5)$)
to[out=180,in=0] ($(v2) + (0,-0.5)$)
to[out=180,in=270] ($(v2) + (-0.5,0)$);
\draw[arrows=->, ultra thick, draw=orange] ($(v2)+(-0.15,0.15)$) to ($(v4)+(0.15,-0.15)$);
\draw[arrows=->, ultra thick, draw=orange] ($(v1)+(0,0.2)$) to ($(v4)+(0,-0.2)$);
\node at ($(v2) + (-0.2,-0.2)$) {$v_1$};
\node at ($(v2) + (-0.1,0.4)$) {$\maru{2}$};
\end{tikzpicture}
\end{center}
\end{minipage}
\begin{minipage}{0.03\hsize}
\begin{center}
\end{center}
\end{minipage}
\begin{minipage}{0.22\hsize}
\begin{center}
\begin{tikzpicture}
\node (v1) at (0,0) {};
\node (v2) at (1.2,0) {};
\node (v3) at (2.4,0) {};
\node (v4) at (0,1.2) {};
\node (v5) at (1.2,1.2) {};
\node (v6) at (2.4,1.2) {};
\node (v7) at (0,2.4) {};
\node (v8) at (1.2,2.4) {};
\node (v9) at (2.4,2.4) {};
\foreach \v in {1,2,...,9} {\fill (v\v) circle (0.1);} 
\path[draw=red, dashed] ($(v4) + (-0.5,0)$) 
to[out=90,in=270] ($(v7) + (-0.5,0)$)
to[out=90,in=180] ($(v7) + (0,0.5)$)
to[out=0,in=180] ($(v8) + (0,0.5)$)
to[out=0,in=90] ($(v8) + (0.5,0)$)
to[out=270,in=90] ($(v5) + (0.5,0)$)
to[out=270,in=0] ($(v5) + (0,-0.5)$)
to[out=180,in=0] ($(v4) + (0,-0.5)$)
to[out=180,in=270] ($(v4) + (-0.5,0)$);
\path[draw=green, very thick] ($(v5) + (-0.4,0)$) 
to[out=90,in=270] ($(v8) + (-0.4,0)$)
to[out=90,in=180] ($(v8) + (0,0.4)$)
to[out=0,in=180] ($(v9) + (0,0.4)$)
to[out=0,in=90] ($(v9) + (0.4,0)$)
to[out=270,in=90] ($(v6) + (0.4,0)$)
to[out=270,in=0] ($(v6) + (0,-0.4)$)
to[out=180,in=0] ($(v5) + (0,-0.4)$)
to[out=180,in=270] ($(v5) + (-0.4,0)$);
\path[draw=orange, dashed] ($(v1) + (-0.4,0)$) 
to[out=90,in=270] ($(v4) + (-0.4,0)$)
to[out=90,in=180] ($(v4) + (0,0.4)$)
to[out=0,in=180] ($(v5) + (0,0.4)$)
to[out=0,in=90] ($(v5) + (0.4,0)$)
to[out=270,in=90] ($(v2) + (0.4,0)$)
to[out=270,in=0] ($(v2) + (0,-0.4)$)
to[out=180,in=0] ($(v1) + (0,-0.4)$)
to[out=180,in=270] ($(v1) + (-0.4,0)$);
\path[draw=cyan, dashed] ($(v2) + (-0.5,0)$) 
to[out=90,in=270] ($(v5) + (-0.5,0)$)
to[out=90,in=180] ($(v5) + (0,0.5)$)
to[out=0,in=180] ($(v6) + (0,0.5)$)
to[out=0,in=90] ($(v6) + (0.5,0)$)
to[out=270,in=90] ($(v3) + (0.5,0)$)
to[out=270,in=0] ($(v3) + (0,-0.5)$)
to[out=180,in=0] ($(v2) + (0,-0.5)$)
to[out=180,in=270] ($(v2) + (-0.5,0)$);
\draw[arrows=->, ultra thick, draw=green] ($(v6)+(-0.15,0.15)$) to ($(v8)+(0.15,-0.15)$);
\draw[arrows=->, ultra thick, draw=green] ($(v9)+(-0.2,0)$) to ($(v8)+(0.2,0)$);
\node at ($(v6) + (0.2,0.2)$) {$v_2$};
\node at ($(v6) + (-0.45,0.05)$) {$\maru{3}$};
\end{tikzpicture}
\end{center}
\end{minipage}
\begin{minipage}{0.03\hsize}
\begin{center}
\end{center}
\end{minipage}
\begin{minipage}{0.22\hsize}
\begin{center}
\begin{tikzpicture}
\node (v1) at (0,0) {};
\node (v2) at (1.2,0) {};
\node (v3) at (2.4,0) {};
\node (v4) at (0,1.2) {};
\node (v5) at (1.2,1.2) {};
\node (v6) at (2.4,1.2) {};
\node (v7) at (0,2.4) {};
\node (v8) at (1.2,2.4) {};
\node (v9) at (2.4,2.4) {};
\foreach \v in {1,2,...,9} {\fill (v\v) circle (0.1);} 
\path[draw=red, very thick] ($(v4) + (-0.5,0)$) 
to[out=90,in=270] ($(v7) + (-0.5,0)$)
to[out=90,in=180] ($(v7) + (0,0.5)$)
to[out=0,in=180] ($(v8) + (0,0.5)$)
to[out=0,in=90] ($(v8) + (0.5,0)$)
to[out=270,in=90] ($(v5) + (0.5,0)$)
to[out=270,in=0] ($(v5) + (0,-0.5)$)
to[out=180,in=0] ($(v4) + (0,-0.5)$)
to[out=180,in=270] ($(v4) + (-0.5,0)$);
\path[draw=green, dashed] ($(v5) + (-0.4,0)$) 
to[out=90,in=270] ($(v8) + (-0.4,0)$)
to[out=90,in=180] ($(v8) + (0,0.4)$)
to[out=0,in=180] ($(v9) + (0,0.4)$)
to[out=0,in=90] ($(v9) + (0.4,0)$)
to[out=270,in=90] ($(v6) + (0.4,0)$)
to[out=270,in=0] ($(v6) + (0,-0.4)$)
to[out=180,in=0] ($(v5) + (0,-0.4)$)
to[out=180,in=270] ($(v5) + (-0.4,0)$);
\path[draw=orange, dashed] ($(v1) + (-0.4,0)$) 
to[out=90,in=270] ($(v4) + (-0.4,0)$)
to[out=90,in=180] ($(v4) + (0,0.4)$)
to[out=0,in=180] ($(v5) + (0,0.4)$)
to[out=0,in=90] ($(v5) + (0.4,0)$)
to[out=270,in=90] ($(v2) + (0.4,0)$)
to[out=270,in=0] ($(v2) + (0,-0.4)$)
to[out=180,in=0] ($(v1) + (0,-0.4)$)
to[out=180,in=270] ($(v1) + (-0.4,0)$);
\path[draw=cyan, dashed] ($(v2) + (-0.5,0)$) 
to[out=90,in=270] ($(v5) + (-0.5,0)$)
to[out=90,in=180] ($(v5) + (0,0.5)$)
to[out=0,in=180] ($(v6) + (0,0.5)$)
to[out=0,in=90] ($(v6) + (0.5,0)$)
to[out=270,in=90] ($(v3) + (0.5,0)$)
to[out=270,in=0] ($(v3) + (0,-0.5)$)
to[out=180,in=0] ($(v2) + (0,-0.5)$)
to[out=180,in=270] ($(v2) + (-0.5,0)$);
\draw[arrows=->, ultra thick, draw=red] ($(v5)+(-0.15,0.15)$) to ($(v7)+(0.15,-0.15)$);
\node at ($(v7) + (0.6,-0.3)$) {$\maru{4}$};
\end{tikzpicture}
\end{center}
\end{minipage}
\end{tabular}
\caption{Another transport from \autoref{this transport}.
The weights transported at the arrows $\maru{1},\maru{2},\maru{3},\maru{4}$ are $\maru{1}=\B/24, \maru{2}=\maru{3}=\B/12, \maru{4}=\A-\B/3$.} \label{AM-GM}
\end{figure}
\end{exa}

\begin{exa} \label{shortest paths}
For a ladder graph $G$ of \autoref{graph and log}, calculate the cost of transport from $\rwx$ to $\rwy$.
In particular, we will pay attention to the way in which the weight on $v$ is transported to $\bar{v}$.
Here, let $d(x,y)\geq3$ and let $\A$ be sufficiently close to $1$.
\vspace{-5mm}
\begin{figure}[H]
\begin{tabular}{ccc}
\begin{minipage}{0.1\hsize}
\begin{center}
\end{center}
\end{minipage}
\begin{minipage}{0.75\hsize}
\begin{center}
\begin{tikzpicture}[every node/.style={circle,fill=white}]
\draw (0,0) node (x) [draw] {$x$} (1.5,0) node (a) {} (x)--(a);
\draw (1.5,0) node (a) [draw] {\;} (2.4,0) node (b) {} (a)--(b);
\draw[dotted] (2.4,0) node (b) {} (3.1,0) node (c) {} (b)--(c);
\draw (3.1,0) node (c) {} (4,0) node (d) [draw] {\;} (c)--(d); 
\draw (4,0) node (d) [draw] {\;} (5.5,0) node (y) [draw] {$y$} (d)--(y);
\draw (0,1.5) node (v1) [draw] {$v$} (1.5,1.5) node (A) {} (v1)--(A);
\draw (1.5,1.5) node (A) [draw] {\;} (2.4,1.5) node (B) {} (A)--(B);
\node at ($(B) + (0.35,0.5)$) {\small{Route $A$}};
\draw[dotted] (2.4,1.5) node (B) {} (3.1,1.5) node (C) {} (B)--(C);
\draw (3.1,1.5) node (C) {} (4,1.5) node (D) [draw] {\;} (C)--(D); 
\draw (4,1.5) node (D) [draw] {\;} (5.5,1.5) node (v2) [draw] {$\bar{v}$} (D)--(v2);
\draw (x)--(v1);
\draw (a)--(A);
\draw (d)--(D);
\draw (y)--(v2);
\draw[arrows=->, ultra thick] ($(v1)+(0.25,0.25)$) to[out=40,in=135] ($(A)+(-0.2,0.2)$);
\draw[arrows=->, ultra thick] ($(D)+(0.19,0.23)$) to[out=45,in=145] ($(v2)+(-0.3,0.25)$);
\node at (-1.2,0.75) {\small{Route $B$}};
\draw[arrows=->, draw=red, ultra thick] ($(v1)+(-0.25,-0.25)$) to[out=220,in=120] ($(x)+(-0.25,0.25)$);
\draw[arrows=->, draw=red, ultra thick] ($(y)+(0.25,0.25)$) to[out=40,in=300] ($(v2)+(0.25,-0.25)$);
\draw[arrows=->, draw=blue, ultra thick, dashed] ($(x)+(0.25,-0.35)$) to[out=0,in=180] ($(y)+(-0.25,-0.35)$);
\end{tikzpicture}
\caption{A ladder graph $G$. In the transportation of both $W_1$ and $W_h$, the weights of the lower row are transported along the lower path (a blue dotted arrow).
} \label{graph and log}
\end{center}
\end{minipage}
\begin{minipage}{0.15\hsize}
\begin{center}
\end{center}
\end{minipage}
\end{tabular}
\end{figure}
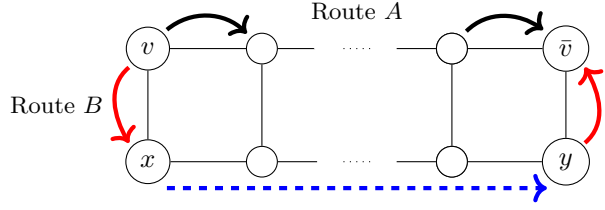 
First, the transport with respect to $W_1$ from $v$ to $\bar{v}$ passes through the upper path (black arrows in the figure).
We call this route the route $A$.
Since all the other weights are transported along the lower path, we have $\wxy=d(x,y)-(1-\A)$.
In the transport with respect to $W_h$, besides the route $A$ as in $W_1$, 
we can also transport the weight of $v$ to $x$ and then to $y$ (together with the weight on $x$) along the lower path 
and finally transport the rest to $\bar{v}$, as shown by the red arrows in the figure.
We call this route the route $B$.
The costs of the routes $A,B$ are
\begin{align*}
\text{Route $A$} &:\; d(x,y)h\left(\frac{1-\A}{2}\right)+2h(\A)+\big\{d(x,y)-2\big\}h\left(1-\frac{1-\A}{2}\right), \\
\text{Route $B$} &:\; 2h\left(\frac{1-\A}{2}\right)+2h\left(1-\frac{1-\A}{2}\right)+\big\{d(x,y)-2\big\}h(1).
\end{align*}
Thus, if the concave function $h$ satisfies
\begin{align}
2\left\{h\left(1-\frac{1-\A}{2}\right)-h(\A)\right\} \leq \big\{d(x,y)-2\big\}\left\{h\left(\frac{1-\A}{2}\right)+h\left(1-\frac{1-\A}{2}\right)-h(1)\right\}, 
\label{ladder ineq}
\end{align}
then the transport of the route $B$ is cheaper, i.e., the shortest path with respect to $W_h$ is the route $B$.
For example, the truncation function in \autoref{h-exa} satisfies \eqref{ladder ineq} for $\A$ close enough to $1$.
\qed\end{exa}

As seen in the above example, we can reduce the transport cost with respect to $W_h$ by integrating the weights into fewer points as possible.
This is a different phenomenon from $W_1$ for which one can regard that each weight is transported to its target point individually.

\subsection{$h$-LLY--Ricci curvature} \label{hcurv-subsec}

We introduce a Lin--Lu--Yau type coarse Ricci curvature by generalizing \autoref{alpha-curv} and \autoref{LLY-curv}. 
Let us define the random walk $\rwx$ starting from a vertex $x$ by the same one as \autoref{random walk} also for a hypergraph.

\begin{defi} [($\A,h$)-Ollivier--Ricci curvature] \label{alpha-curv2}
Let $\alpha\in[0,1]$.
For each pair of vertices $(x,y)$ of the hypergraph $H=(V,E)$, we define
\begin{align*}
\tkaxy \:= 1-\frac{\whxy}{\wh(\dex,\dey)}
\end{align*} 
to be the \emph{$(\A,h)$-Ollivier--Ricci curvature} $\tkaxy$ between $x,y$.
\end{defi}

\begin{rem}
Note that in \autoref{alpha-curv}, we compared $\wxy$ and $d(x,y)=\W(\dex,\dey)$.
As for $W_h$, we have $\wh(\dex,\dey)=h(1)d(x,y)$.
\end{rem}

We again observe $\tilde{\K}(1,h;x,y)=0$, and follow the procedure of \cite{LLY} we reviewed in \autoref{Premininaries}.
For this purpose, we first generalize \autoref{LLY-bdd} with a slight modification (see \autoref{on 2-boundness} after the proof).

\begin{lem} \label{boundness}
For a hypergraph $H=(V,E)$, the following inequalities hold for any $\A\in[0,1]$ and distinct $x,y\in V$.
\begin{enumerate}[(1)]
\item $\dis \tkaxy \geq -\frac{2h^\prime(0)(1-\A)}{\wh(\dex,\dey)} .$
\item $\dis \tkaxy \leq \frac{2(1-\A)}{d(x,y)} .$
\end{enumerate}
\end{lem}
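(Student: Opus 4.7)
Both inequalities, after multiplication by $\wh(\dex,\dey)=h(1)d(x,y)$, are equivalent to one-sided bounds on $\whxy$, namely $\whxy\leq\wh(\dex,\dey)+2h'(0)(1-\A)$ for (1) and $\whxy\geq\wh(\dex,\dey)-2(1-\A)h(1)$ for (2). The plan is to establish (1) by exhibiting one explicit competitor in \eqref{Wh-def}, and to establish (2) by comparing the $W_h$-cost of \emph{every} competitor to a single $W_1$-transport.

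For (1), I would concatenate three transports. First, compress $\rwx$ onto $\dex$ by performing, for each neighbor $z$ of $x$, one step within a hyperedge $e\ni\{x,z\}$ that moves the mass $(1-\A)/d_x$ sitting on $z$ back to $x$; next, transport $\dex$ to $\dey$ along a $d(x,y)$-length geodesic of hyperedges, saturating $\wh(\dex,\dey)=h(1)d(x,y)$; finally, spread $\dey$ onto $\rwy$ by the symmetric procedure. Each compression step has $W_1$-cost exactly $(1-\A)/d_x$, and since $h$ is concave with $h(0)=0$, the map $\la\mapsto h(\la)/\la$ is nonincreasing, hence $h\bigl((1-\A)/d_x\bigr)\leq h'(0)(1-\A)/d_x$. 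Summing over the $d_x$ neighbors bounds the compression stage by $h'(0)(1-\A)$; the spreading stage contributes another $h'(0)(1-\A)$ by the same argument; adding the three stages gives (1).

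For (2), I would take an arbitrary competitor $(\xi_i)_{i\in[I]_0}\in P(\rwx,\rwy)$. Since $\supp(\xi_i-\xi_{i-1})$ lies in one hyperedge, every pair of points sending or receiving mass at step $i$ is at distance at most $1$ and the transported mass is at most $1$, so $W_1(\xi_{i-1},\xi_i)\leq 1$. Concavity with $h(0)=0$ yields $h(\la)\geq\la h(1)$ on $[0,1]$, hence
\begin{align*}
\sum_{i=1}^{I}h\bigl(W_1(\xi_{i-1},\xi_i)\bigr)\;\geq\;h(1)\sum_{i=1}^{I}W_1(\xi_{i-1},\xi_i)\;\geq\;h(1)\,W_1(\rwx,\rwy),
\end{align*}
the last step being the triangle inequality for $W_1$. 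Passing to the infimum gives $\whxy\geq h(1)\,W_1(\rwx,\rwy)$, and the classical Ollivier--Lin--Lu--Yau bound $W_1(\rwx,\rwy)\geq d(x,y)-2(1-\A)$, obtained by Kantorovich--Rubinstein duality with the $1$-Lipschitz function $v\mapsto d(v,y)$, then closes (2).

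The main delicate point is the per-step inequality $W_1(\xi_{i-1},\xi_i)\leq 1$, which relies on the hyperedge-localization built into \autoref{W}; beyond that, everything reduces to the chord/tangent comparisons for a concave function vanishing at the origin.
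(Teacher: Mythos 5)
Your proof is correct and follows essentially the same route as the paper: for (1) the paper also reduces to the triangle inequality through $\dex,\dey$ and the bound $d_xh\bigl((1-\A)/d_x\bigr)\leq h^\prime(0)(1-\A)$, which is exactly your explicit three-stage competitor, and for (2) the paper uses the identical chain $\whxy\geq h(1)\wxy$ followed by the Lin--Lu--Yau bound $\wxy\geq d(x,y)-2(1-\A)$. Your explicit justification of the per-step bound $W_1(\xi_{i-1},\xi_i)\leq 1$ is a welcome addition that the paper leaves implicit.
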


\begin{proof}
$ $\newline
(1) First, from the triangle inequality, 
\begin{align} 
\whxy \leq \wh(\rwx,\dex) + \wh(\dex,\dey) + \wh(\dey,\rwy). \label{3.3.3-1}
\end{align}
The transport cost $W_h\big(\rwx,\dex\big)$ depends on how many neighbors of $x$ live a common hyperedge including $x$.
The most expensive case is that we transport from every neighbor to $x$ individually, as in a graph.
Therefore, $\wh(\rwx,\dex)$ can be bounded from above by the transportation cost in the case of a graph, namely
\begin{align*} 
\wh(\rwx,\dex)\leq d_x h\left( \frac{1-\A}{d_x} \right),\quad \wh(\rwy,\dey)\leq d_y h\left( \frac{1-\A}{d_y} \right).
\end{align*}
Combining this with the inequality (\ref{3.3.3-1}), we obtain
\begin{align*} 
\whxy
&\leq \wh(\dex,\dey) + d_x h\left( \frac{1-\A}{d_x} \right) + d_y h\left( \frac{1-\A}{d_y} \right) \leq \wh(\dex,\dey) + 2h^\prime(0)(1-\A), 
\end{align*} 
where the second inequality follows from the concavity of $h$.
This shows the claim. \\
(2) We denote by $\rwx = \mu_0 \to \mu_1 \to \cdots \to \mu_{I} = \rwy$ a shortest path of $\whxy$.
Then we have a lower bound
\begin{align} 
\whxy &= \sum_{i=1}^{I} h\left( \W(\mu_{i-1},\mu_i) \right) \geq h(1) \sum_{i=1}^{I} \W(\mu_{i-1},\mu_i) \geq h(1) \wxy, \label{W and h(1)W_1}
\end{align}
by the concavity of $h$ and the triangle inequality.
Therefore, combining this with \autoref{LLY-bdd}, we obtain 
\begin{align} 
\tkaxy &= 1- \frac{\whxy}{\wh(\dex,\dey)} \leq 1- \frac{\wxy}{d(x,y)} \leq \frac{2(1-\A)}{d(x,y)}. \label{WとW_1}
\end{align}
\end{proof}

\begin{rem} \label{on 2-boundness}
In $(2)$ above, if we begin with the triangle inequality via $\dex$ and $\dey$ similarly to \eqref{3.3.3-1}, then we have
$\whxy \geq \wh(\dex,\dey) - 2h^\prime(0)(1-\A)$ and end up with
\begin{align*}
\tkaxy \leq \frac{2h^\prime(0)(1-\A)}{\wh(\dex,\dey)} = \frac{h^\prime(0)}{h(1)}\cdot\frac{2(1-\A)}{d(x,y)},
\end{align*}
which is worse than $(2)$.
\end{rem}

Now, for taking the limit as $\A\uto1$, we would like to generalize \autoref{global concavity}.
However, due to the concavity of $h$, $\tilde{\K}(\cdot,h;x,y)$ loses the global concavity.
Toward a better understanding of the behavior of $\tilde{\K}(\cdot,h;x,y)$, the following fact in the case of graphs is worth mentioning.

\begin{thm}[{\cite[Theorem 3.4]{BCLMP}, \cite[Theorem 3.2]{CK}}] \label{3pieces}
For any connected, locally finite and simple graph $G=(V,E)$ and 
any of its vertices $x,y$, the function $\A\mapsto\kaxy$ is concave and piecewise linear on $[0,1]$.
Moreover, the number of its partitions is at most $3$.
\end{thm}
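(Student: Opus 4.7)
The plan is to reformulate $\wxy$ via Kantorovich--Rubinstein duality, observe that this makes it a supremum of affine-in-$\A$ functions over a polytope, and then do a careful combinatorial analysis to bound the number of linear pieces by $3$.

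First I would apply Kantorovich--Rubinstein duality. Since $\rwx$ and $\rwy$ are supported on the finite set $F\:=B_1(x)\cup B_1(y)$, the space of $1$-Lipschitz functions on $V$ relevant to the pairing $\int f\,d(\rwx-\rwy)$ is, up to additive constants, the compact polytope
\begin{align*}
P \:= \big\{f:F\to\R \;\big|\; |f(u)-f(v)|\leq d(u,v)\text{ for all }u,v\in F,\;f(x)=0\big\}.
\end{align*}
Substituting $\rwx=\A\dex+(1-\A)m_x^0$ (with $m_x^0$ the $\A=0$ random walk) gives
\begin{align*}
\wxy \;=\; \sup_{f\in P}\Big\{\A\big[f(x)-f(y)\big]+(1-\A)\,A(f)\Big\},\qquad A(f)\:=\sum_{v}f(v)\big[m_x^0(v)-m_y^0(v)\big].
\end{align*}
For each fixed $f\in P$ the bracketed expression is affine in $\A$, so $\wxy$ is a supremum of affine functions, hence convex and piecewise linear on $[0,1]$. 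Dividing by $d(x,y)$ and subtracting from $1$ shows that $\A\mapsto\kaxy$ is concave and piecewise linear, recovering \autoref{global concavity} with the added piecewise-linear structure.

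Next I would characterize the slopes. Writing $s(f)\:=\big[f(x)-f(y)\big]-A(f)$, the above becomes $\wxy=\sup_{f\in P}\big\{A(f)+\A\,s(f)\big\}$, so on each linear piece the slope is $s(f^\star)$ for the currently optimal potential $f^\star\in P$, and these slopes are strictly increasing as $\A$ moves from $0$ to $1$. At $\A=1$ the maximizers are precisely those $f$ with $f(x)-f(y)=d(x,y)$; among these, the \emph{largest} slope is $d(x,y)-\min\{A(f):f\in P,\,f(x)-f(y)=d(x,y)\}$, giving the rightmost linear piece. Symmetrically at $\A=0$ the optimum is $W_1(m_x^0,m_y^0)$ and the leftmost slope is determined by the maximizers of $A(f)$.

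The main obstacle is to show that \emph{at most one} intermediate linear piece can occur between these two extremal pieces. My approach would be to argue on the level of optimal couplings rather than potentials. A coupling $\pi^\A$ of $(\rwx,\rwy)$ can always be decomposed as $\pi^\A = \A\,\sigma + (1-\A)\,\tau$, where $\sigma$ is a coupling of $(\dex,\dey)$ and $\tau$ is a coupling of $(m_x^0,m_y^0)$, \emph{provided} one transports the idle masses $\A\dex\to\A\dey$ separately from the neighbor masses. The first observation is that for $\A$ close to $1$ some fraction of the $\A$-idle mass at $x$ is actually better routed through a neighbor of $x$ lying on a geodesic to $y$ (and symmetrically near $y$); this phenomenon accounts for exactly \emph{one} change of slope near each endpoint. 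The second observation, which is the crux, is that once every such ``$\A$-mass shortcut'' is activated, increasing $\A$ further simply rescales the amount of shortcut mass linearly in $\A$, so no new change of slope can occur. Combining the extremal piece at each end with at most one common middle piece yields the bound of $3$. I expect this combinatorial step---showing that the intermediate optimal couplings are all convex combinations of two extremal ones---to be the hardest part of the argument, and would carry it out by a primal complementary-slackness analysis of the optimal $f^\star$ on the three regions $\{v:f^\star(v)-f^\star(x)=d(v,x)\}$, $\{v:f^\star(y)-f^\star(v)=d(v,y)\}$ and their complement in $F$.
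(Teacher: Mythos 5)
First, a point of comparison: the paper does not prove \autoref{3pieces} at all --- it is imported from \cite{BCLMP} and \cite{CK}, with only the remark that \cite{BCLMP} treats $x\sim y$ and \cite{CK} the general case via Kantorovich duality. So there is no internal proof to measure your argument against; it has to stand on its own, and as it stands it is incomplete.

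The first half is correct and is indeed the route of \cite{CK}: by Kantorovich--Rubinstein duality, $\wxy$ is a supremum of functions affine in $\A$ indexed by a compact polytope of potentials with finitely many vertices, hence convex and piecewise linear, and $\kaxy$ is concave and piecewise linear. (You should add that a function on $F=B_1(x)\cup B_1(y)$ satisfying $|f(u)-f(v)|\leq d(u,v)$ for the ambient graph distance extends to a $1$-Lipschitz function on $V$, so restricting to $P$ loses nothing, and that the supremum is attained at a vertex of $P$ --- this is what yields finitely many pieces.) The genuine gap is the bound of three pieces, which is the entire content of the theorem beyond \autoref{global concavity}, and your argument for it rests on two unproved assertions: that rerouting idle mass through a geodesic neighbor accounts for ``exactly one'' slope change near each endpoint, and that once all such shortcuts are active no further breakpoint occurs. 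Nothing in the sketch excludes several shortcuts of different combinatorial types (through different neighbors, or involving different portions of the neighbor mass) activating at distinct thresholds of $\A$, which would produce more than two breakpoints; ruling this out is precisely the hard part of \cite[Theorem 3.4]{BCLMP} and \cite[Theorem 3.2]{CK}, where it is done by exhibiting explicit intervals $[0,\A_1]$ and $[\A_2,1]$, with endpoints expressed in terms of $d_x$ and $d_y$, on which the function is provably linear and whose complement is a single interval. Note also that the decomposition $\pi^\A=\A\,\sigma+(1-\A)\,\tau$ is not available for a general (in particular, optimal) coupling: it holds only for couplings that transport the idle mass separately, which is exactly what optimal couplings fail to do for $\A$ close to $1$, so the proposed complementary-slackness analysis on the three regions would have to be constructed essentially from scratch. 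In summary, the proposal establishes concavity and piecewise linearity but not the bound on the number of linear pieces.
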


In \cite{BCLMP}, the case of $x\sim y$ was shown, 
then in \cite{CK} established the general case by using the Kantorovich duality.

More precisely, \autoref{3pieces} is shown by the property that when $\A$ moves from $0$ to $1$, the shortest path in $\wxy$ changes at most twice.
In particular, $\varphi(\A)\:=\kaxy/(1-\A)$ is constant for $\A$ sufficiently close to $1$.
On the other hand, although the existence of a shortest path for $W_h$ is shown for each $\A\in[0,1]$ as described after \autoref{W}, 
probability measures $\mu_i$ appearing in the transportation by $W_h$ is not necessarily of the type of the random walk $m_v^\A$.
Hence, it is unclear it the shortest path of $\whxy$ changes finitely many times when $\A$ moves from $0$ to $1$. 
Nonetheless, it seems plausible to expect that the following holds in the setting of our paper.

\begin{conj} \label{conj}
For any hypergraph $H=(V,E)$ and any of its vertices $x,y$, 
the function $\A\mapsto\tkaxy$ is a piecewise convex function on $[0,1]$, 
and the graph connecting the joining points with line segments gives a concave function.
\end{conj}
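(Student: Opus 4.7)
The plan is to adapt the parameterization strategy of \cite{BCLMP,CK} to the $W_h$ setting and then attempt an interpolation argument between joining points. First, I would introduce a combinatorial \emph{type} $\tau$ for each admissible transport in \autoref{W}, encoding the path length $I$, the ordered sequence of hyperedges traversed, a vertex transport structure (as normalized in \autoref{trans path}), and the local coupling pattern within each hyperedge. By the uniqueness-of-transport-path normalization of \autoref{trans path} and the compactness argument in the proof of \autoref{mini}, only finitely many types can ever be optimal on $\alpha\in[0,1]$.

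Next, for a fixed type $\tau$, on any maximal open subinterval of the set where $\tau$ is admissible, the intermediate measures $\xi_i^{(\alpha)}$ and inner couplings $\pi_i^{(\alpha)}$ should be choosable affine in $\alpha$, since all defining constraints are linear in $\alpha$ (because the random walks $m_x^\alpha, m_y^\alpha$ are). Hence each $W_1\bigl(\xi_{i-1}^{(\alpha)},\xi_i^{(\alpha)}\bigr)$ would be affine in $\alpha$, and by concavity of $h$, the cost $F_\tau(\alpha)\:=\sum_i h\bigl(W_1(\xi_{i-1}^{(\alpha)},\xi_i^{(\alpha)})\bigr)$ would be concave. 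Since $\whxy$ is the pointwise minimum of finitely many such $F_\tau$ on their respective feasibility intervals, $[0,1]$ would split into finitely many subintervals on each of which $\phi(\alpha)\:=\whxy$ is concave. Because $\wh(\delta_x,\delta_y)=h(1)d(x,y)$ is constant in $\alpha$, this would yield the piecewise convexity of $\tkaxy$.

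The remaining and most difficult task is to prove that connecting the joining points with line segments gives a concave function. Writing the joining points as $0=\alpha_0<\alpha_1<\dots<\alpha_N=1$ and setting $\sigma_j\:=(\phi(\alpha_{j+1})-\phi(\alpha_j))/(\alpha_{j+1}-\alpha_j)$, the claim is equivalent to the secant slopes $\sigma_j$ being non-decreasing in $j$. I would attempt an interpolation-of-paths argument: given optimal paths realizing $\phi(\alpha_{j-1})$ and $\phi(\alpha_{j+1})$ with types $\tau_-$ and $\tau_+$, construct a convex combination at $\alpha_j$ — gluing the two hyperedge sequences when they differ, padded with trivial steps — and use the concavity of $h$ to bound its cost by $\tfrac{1}{2}\bigl(F_{\tau_-}(\alpha_{j-1})+F_{\tau_+}(\alpha_{j+1})\bigr)$. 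The main obstacle is that two qualitatively different regimes of joining points must be handled: (A) optimality-driven ones, where two types cross while both remain feasible on a neighborhood — here the classical concave-corner argument should transfer — and (B) feasibility-driven ones, where one type ceases to exist. Handling (B) is the crux, as it requires constructing a combined path whose cost absorbs the discrepancy using simultaneously the concavity of $h$ and the affine $\alpha$-dependence of $m_x^\alpha,m_y^\alpha$; I expect this step to require substantial new ideas beyond the graph-case arguments of \cite{BCLMP,CK}.
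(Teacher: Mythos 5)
This statement is labelled \autoref{conj} precisely because the paper does not prove it: it is left open, and \autoref{FP} lists the shape of $\A\mapsto\tkaxy$ among the further problems. So there is no proof of record to compare against, and your proposal must stand on its own. As it stands it is a sensible plan of attack but not a proof, and it has two genuine gaps. The first is the asserted finiteness of optimal types. The normalization of \autoref{trans path} and the compactness argument in \autoref{mini} are carried out for a \emph{fixed} pair $(\mu,\nu)$ (hence a fixed $\A$) and a fixed step number $I$; they yield existence of a minimizer, not a bound on $I$ nor on the collection of hyperedge sequences and coupling patterns that can be optimal as $\A$ ranges over $[0,1]$. This is exactly the obstruction the paper identifies before stating the conjecture: the intermediate measures need not be of random-walk type, so the argument of \cite{BCLMP,CK} that the optimizer changes only finitely often does not transfer. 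Without this finiteness, the decomposition of $[0,1]$ into finitely many subintervals --- and hence ``piecewise'' anything --- does not follow. A smaller issue in the same step: for affinely varying marginals, $W_1\big(\xi_{i-1}^{(\A)},\xi_i^{(\A)}\big)$ is by duality a maximum of affine functions of $\A$, hence convex and only piecewise affine; to get affineness you must fold the optimal coupling pattern of every step into the type and subdivide further, which you gesture at but should make explicit, since it feeds back into the finiteness problem.

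The second and more serious gap is that the part of the conjecture carrying the real content --- concavity of the piecewise-linear interpolation through the joining points, i.e.\ monotonicity of the secant slopes $\sigma_j$ --- is not established; you explicitly defer it. The interpolation-of-paths construction you sketch for the feasibility-driven breakpoints does not obviously work: gluing the optimal paths for $\A_{j-1}$ and $\A_{j+1}$ and padding with trivial steps generally increases the number of steps, and each additional step contributes $h$ of a nonzero $W_1$-increment; since $h$ is concave with $h'(0)>0$, these extra terms are first-order in the transported mass and can overwhelm the gain from concavity within a single step. So both the finiteness of the partition and the secant-slope monotonicity remain open, and the proposal should be read as an outline of an approach to the conjecture rather than a proof of it.
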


Then we define the LLY--Ricci curvature for $W_h$ as follows.

\begin{defi}[$h$-LLY--Ricci curvature] \label{with h}
We define the \emph{$h$-LLY--Ricci curvature} between two vertices $x,y$ as
\begin{align*} 
\tkxy \:= \liminf_{\A\uto1} \frac{\tkaxy}{1-\A} .
\end{align*}
\end{defi}

If \autoref{conj} is resolved affirmatinely, then the above $\liminf$ can be replaced with $\lim$.

We conclude this subsection by discussing fundamental properties of the $h$-LLY--Ricci curvature.
We first compare it with the LLY--Ricci curvature directly defined as an extension to hypergraphs.

\begin{prop} \label{relation}
For any hypergraph $H=(V,E)$ and any two points $x,y\in V$, we have
\begin{align}
\tkxy \leq \K(x,y). \label{3.4.1}
\end{align}
\end{prop}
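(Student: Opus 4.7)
The plan is to derive \eqref{3.4.1} as an immediate consequence of the pointwise comparison $\tkaxy \leq \kaxy$ that has already appeared (in essence) inside the proof of \autoref{boundness}. The whole argument is about three lines long, so the only issue is to package the existing ingredients cleanly.

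First, I would record the identity $\wh(\dex,\dey) = h(1)\,d(x,y)$, which is noted in the remark right after \autoref{alpha-curv2}. Next, I would reproduce the key estimate \eqref{W and h(1)W_1}: taking any shortest path $\rwx = \mu_0 \to \mu_1 \to \cdots \to \mu_I = \rwy$ for $\whxy$, concavity of $h$ with $h(0)=0$ gives $h(t) \geq h(1)\,t$ on $[0,1]$, so
\begin{align*}
\whxy = \sum_{i=1}^{I} h\big(\W(\mu_{i-1},\mu_i)\big) \geq h(1)\sum_{i=1}^I \W(\mu_{i-1},\mu_i) \geq h(1)\,\wxy,
\end{align*}
the last step being the triangle inequality for $W_1$. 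Combining these two observations yields, for every $\A\in[0,1)$,
\begin{align*}
\tkaxy = 1 - \frac{\whxy}{h(1)\,d(x,y)} \leq 1 - \frac{\wxy}{d(x,y)} = \kaxy.
\end{align*}

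Dividing by $1-\A > 0$ and taking $\liminf$ as $\A \uto 1$ finishes the proof: since $\lim_{\A\uto1}\kaxy/(1-\A)$ exists (this is precisely $\kxy$ by \autoref{LLY-curv}, whose proof via \autoref{global concavity} and \autoref{LLY-bdd} applies verbatim to hypergraphs once the random walk and $W_1$ are defined as in \autoref{random walk}), we obtain
\begin{align*}
\tkxy = \liminf_{\A\uto1}\frac{\tkaxy}{1-\A} \leq \liminf_{\A\uto1}\frac{\kaxy}{1-\A} = \kxy.
\end{align*}

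There is really no obstacle here; the only thing to be slightly careful about is the distinction between $\liminf$ (in the definition of $\tkxy$) and $\lim$ (in the definition of $\kxy$), which is handled by the trivial monotonicity of $\liminf$ combined with the existence of the limit on the right-hand side. Everything else is borrowed directly from material already established earlier in the excerpt.
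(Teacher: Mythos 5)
Your proposal is correct and follows essentially the same route as the paper: the paper's proof likewise invokes the first inequality of \eqref{WとW_1} (i.e., $\whxy \geq h(1)\wxy$ from \eqref{W and h(1)W_1} together with $\wh(\dex,\dey)=h(1)d(x,y)$) to get $\tkaxy\leq\kaxy$ pointwise in $\A$, and then passes to the limit. Your write-up merely spells out the $\liminf$ versus $\lim$ bookkeeping more explicitly, which is harmless.
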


\begin{proof}
By the first inequality of \eqref{WとW_1} in the proof of \autoref{boundness}, for any $\A\in[0,1]$ and $x,y\in V$, 
\begin{eqnarray*} \displaystyle
\tkaxy \leq 1-\frac{W_1(m_x^\alpha,m_y^\alpha)}{d(x,y)} = \kappa(\alpha;x,y)
\end{eqnarray*}
holds.
This implies \eqref{3.4.1}.
\end{proof}

Note that the larger the difference between $\tkxy$ and $\kxy$ is (i.e., the smaller $\tkxy$ is relative to $\kxy$), the more effective the concavity of $h$ is between $x,y$.

Finally, one can straightforwardly generalize \autoref{LLYboundness}.
Notice that the condition $\tkxy\geq\K$ is stronger than $\kxy\geq\K$ by \autoref{relation}.

\begin{prop} \label{hboundness}
If for any two adjacent vertices the $h$-LLY--Ricci curvature between them is greater than or equal to $\K$, 
then for any $x,y\in V$, $\tkxy$ is greater than or equal to $\K$.
In other words, the following holds:
\begin{align*}
\inf_{x,y\in V} \tkxy = \inf_{x\sim y} \tkxy.
\end{align*}
\end{prop}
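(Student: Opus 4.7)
The plan is to adapt the graph-theoretic proof of \autoref{LLYboundness} to the present setting, with two bookkeeping changes: replace $W_1$ by $W_h$, and use $\wh(\dex,\dey)=h(1)d(x,y)$ (noted in the remark after \autoref{alpha-curv2}) in place of $d(x,y)$ as the normalizing denominator in $\tkaxy$. Given arbitrary $x,y\in V$ with $n\coloneqq d(x,y)$, I first pick a graph geodesic $x=x_0,x_1,\ldots,x_n=y$ in the hypergraph, along which every consecutive pair $x_{i-1},x_i$ shares a common hyperedge and is therefore adjacent.

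The key steps, in order, go as follows. First, the triangle inequality for the distance $W_h$ on $\PP_c(V)$ yields
\begin{align*}
\whxy \leq \sum_{i=1}^{n}\wh\bigl(m_{x_{i-1}}^\A,m_{x_i}^\A\bigr).
\end{align*}
Second, the hypothesis $\tkxy\geq \K$ on adjacent vertices combined with the $\liminf$ definition of the $h$-LLY--Ricci curvature gives, for each $i$ and every $\eps>0$, some $\delta_i>0$ with $\tilde{\kappa}(\A,h;x_{i-1},x_i)\geq (\K-\eps)(1-\A)$ on $\A\in(1-\delta_i,1)$; since $\wh(\delta_{x_{i-1}},\delta_{x_i})=h(1)$, this rearranges to $\wh(m_{x_{i-1}}^\A,m_{x_i}^\A)\leq h(1)\bigl(1-(\K-\eps)(1-\A)\bigr)$. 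Third, setting $\delta\coloneqq\min_i\delta_i$ and summing over $i\in[n]$ gives, for $\A\in(1-\delta,1)$,
\begin{align*}
\whxy \leq nh(1)\bigl(1-(\K-\eps)(1-\A)\bigr)=\wh(\dex,\dey)\bigl(1-(\K-\eps)(1-\A)\bigr),
\end{align*}
whence $\tkaxy\geq(\K-\eps)(1-\A)$ on the same range. Dividing by $(1-\A)$, taking $\liminf_{\A\uto1}$, and finally letting $\eps\dto0$ produces $\tkxy\geq\K$.

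The main obstacle is essentially bookkeeping rather than conceptual: because \autoref{conj} remains open, $\tkxy$ is only a $\liminf$ and not a genuine limit, so one must manage $\eps$--$\delta$ data uniformly across the $n$ adjacent pairs on the geodesic. Taking the minimum of the finitely many $\delta_i$'s suffices, and the rest of the argument relies only on the triangle inequality for $W_h$ (already established in this paper) and on the identity $\wh(\dex,\dey)=h(1)d(x,y)$. No analogue of the local--to--global interpolation trick used for the graph LLY curvature is needed, since the argument is carried out directly on the $\A$--level before passing to the limit.
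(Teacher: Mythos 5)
Your proposal is correct and follows essentially the same route as the paper, which simply invokes the argument of \cite[Lemma 2.3]{LLY} adapted via the triangle inequality for $W_h$ and the normalization $\wh(\dex,\dey)=h(1)d(x,y)$; your $\eps$--$\delta$ bookkeeping with $\delta=\min_i\delta_i$ is exactly the point the paper flags when it remarks that defining $\tkxy$ as a $\liminf$ (rather than a $\limsup$) is essential here.
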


\begin{proof}
We can argue in the same way as \cite[Lemma 2.3]{LLY} for $W_1$ thanks to the triangle inequality for $W_h$.
We remark that it is essential to define $\tkxy$ as $\liminf$ at this point.
\end{proof}

\section{Examples} \label{Examples}

In this section, we calculate $\whxy,\kaxy,\tkaxy$ for several examples, 
and compare the values of ($h$-)Ollivier--Ricci curvature for $\A=0,1/2$ and ($h$-)LLY--Ricci curvature.

\subsection{Complete graphs and cycle graphs} \label{particular}

In this subsection, we will assume that $x,y\in V$ are adjacent and $\A\in[0,1)$.
In addition, following the notation of \cite{LLY}, if a graph $G$ has the same curvature $\K$ on any edge, 
then $G$ is said to have a constant curvature $\K$ and we denote it as $\K(\A;G)=\K$ or $\tilde{\K}(h;G)=\K$.

\begin{exa}[Complete graphs $K_n$] \label{complete}
For the complete graph $K_n$, since 
$\dis \whxy =h\left(\left|\A-\frac{1-\A}{n-1}\right|\right)$, 
\begin{align*}
\tkaxy=\frac{h(1)-h\left(\left|\A-\frac{1-\A}{n-1}\right|\right)}{h(1)}
\end{align*}
holds.
Therefore, 
\begin{align*} 
\K(0;K_n)&=\frac{n-2}{n-1},& \K\left(\frac{1}{2};K_n\right)&=\frac{n}{2(n-1)},& \K(K_n)&=\frac{n}{n-1}, \\
\tilde{\K}(0,h;K_n)&=\frac{h(1)-h\big(\frac{1}{n-1}\big)}{h(1)},& \tilde{\K}\left(\frac{1}{2},h;K_n\right)&=
\frac{h(1)-h\big(\frac{n-2}{2(n-1)}\big)}{h(1)},& \tilde{\K}(h;K_n)&=\frac{n}{n-1}\frac{h^\prime(1)}{h(1)} .
\end{align*}
\qed \end{exa}

\begin{exa}[Cycle graphs $C_n$] \label{cycle}
$ $\newline
\underline{The case $n=2$}: Since $C_2=K_2$, from \autoref{complete},
\begin{align*} 
\K(0;K_2)&=0,& \K(1/2;K_2)&=1,& \K(K_2)&=2, \\
\tilde{\K}(0,h;K_2)&=0,& \tilde{\K}(1/2,h;K_2)&=1,& \tilde{\K}(h;K_2)&=\frac{2h^\prime(1)}{h(1)} .
\end{align*} \vspace{2mm}
\underline{The case $n=3,4,5$}: 
Since $\whxy=h\big(\left|\A-\frac{1-\A}{2}\right|\big)+(n-3)h\big(\frac{1-\A}{2}\big)$ holds, we have 
\begin{align*} 
\K(\A;C_n)=1-\left| \A-\frac{1-\A}{2} \right| -\frac{(n-3)(1-\A)}{2} ,\quad 
\tilde{\K}(\A,h;C_n)=\frac{h(1)-h\big(\left|\A-\frac{1-\A}{2}\right|\big)-(n-3)h\big(\frac{1-\A}{2}\big)}{h(1)}.
\end{align*} 
Hence, 
\begin{align*} 
\K(0;C_n)&=\frac{4-n}{2},& \K\left(\frac{1}{2};C_n\right)&=\frac{6-n}{4},& \K(C_n)&=\frac{6-n}{2}, \\
\tilde{\K}(0,h;C_n)&=\frac{h(1)-(n-2)h\big(\frac{1}{2}\big)}{h(1)},& 
\tilde{\K}\left(\frac{1}{2},h;C_n \right)&=\frac{h(1)-(n-2)h\big(\frac{1}{4}\big)}{h(1)},& 
\tilde{\K}(h;C_n)&=\frac{3h^\prime(1)-(n-3)h^\prime(0)}{2h(1)} .
\end{align*} \vspace{2mm}
\underline{The case $n\geq6$}: Since $\whxy=h(\A)+2h\big(\frac{1-\A}{2}\big)$ holds, we have 
\begin{align*} 
\K(\A;C_n)=0 ,\quad \tilde{\K}(\A,h;C_n)=\frac{h(1)-h(\A)-2h\big(\frac{1-\A}{2}\big)}{h(1)}.
\end{align*} 
Hence, 
\begin{align*} 
\K(0;C_n)&=0,& \K(1/2;C_n)&=0,& \K(C_n)&=0, \\
\tilde{\K}(0,h;C_n)&=\frac{h(1)-2h\big(\frac{1}{2}\big)}{h(1)},& 
\tilde{\K}\left(\frac{1}{2},h;C_n \right)&=\frac{h(1)-h\big(\frac{1}{2}\big)-2h\big(\frac{1}{4}\big)}{h(1)},& 
\tilde{\K}(h;C_n)&=\frac{h^\prime(1)-h^\prime(0)}{h(1)} .
\end{align*}
\qed \end{exa}

\subsection{Path graphs} \label{line exa}

By the definition of $W_h$, if the vertices $x,y$ are sufficiently far apart and the concavity of $h$ is strong, 
then the transportation between $\rwx$ and $\rwy$ is done along a shortest path connecting $x,y$ as seen in \autoref{shortest paths}.
We shall calculate the curvature for path graphs such as \autoref{fig8}, \autoref{fig9}, \autoref{fig10}.
The vertices $x,y$ are assumed to be at the positions specified in the respective figures, they are indeed only essential cases.

\begin{figure}[H]
\begin{center}
\begin{tikzpicture}[every node/.style={circle,fill=white}]
\draw (0,0) node (x) [draw] {$x$} (1.5,0) node (a) {} (x)--(a);
\draw (1.5,0) node (a) [draw] {\;} (2.4,0) node (b) {} (a)--(b);
\draw[dotted] (2.4,0) node (b) {} (3.1,0) node (c) {} (b)--(c);
\draw (3.1,0) node (c) {} (4,0) node (d) [draw] {\;} (c)--(d); 
\draw (4,0) node (d) [draw] {\;} (5.5,0) node (y) [draw] {$y$} (d)--(y);
\end{tikzpicture}
\caption{A path graph with $x,y$ on the ends.} \label{fig8}
\end{center}
\begin{center}
\begin{tikzpicture}[every node/.style={circle,fill=white}] 
\draw (0,0) node (x) [draw] {$x$} (1.5,0) node (a) {} (x)--(a);
\draw (1.5,0) node (a) [draw] {\;} (2.4,0) node (b) {} (a)--(b);
\draw[dotted] (2.4,0) node (b) {} (3.1,0) node (c) {} (b)--(c);
\draw (3.1,0) node (c) {} (4,0) node (d) [draw] {\;} (c)--(d); 
\draw (4,0) node (d) [draw] {\;} (5.5,0) node (y) [draw] {$y$} (d)--(y);
\draw (5.5,0) node (y) [draw] {$y$} (7,0) node (e) {} (y)--(e);
\draw (7,0) node (e) [draw] {\;};
\end{tikzpicture}
\caption{A path graph with $x$ on one end and $y$ next to the other end.} \label{fig9}
\end{center}
\begin{center}
\begin{tikzpicture}[every node/.style={circle,fill=white}]
\draw (0,0) node (f) [draw] {\;} (1.5,0) node (x) {} (f)--(x);
\draw (1.5,0) node (x) [draw] {$x$} (3,0) node (a) {} (x)--(a);
\draw (3,0) node (a) [draw] {\;} (3.9,0) node (b) {} (a)--(b);
\draw[dotted] (3.9,0) node (b) {} (4.6,0) node (c) {} (b)--(c);
\draw (4.6,0) node (c) {} (5.5,0) node (d) [draw] {\;} (c)--(d); 
\draw (5.5,0) node (d) [draw] {\;} (7,0) node (y) [draw] {$y$} (d)--(y);
\draw (7,0) node (y) [draw] {$y$} (8.5,0) node (e) {} (y)--(e);
\draw (8.5,0) node (e) [draw] {\;};
\end{tikzpicture}
\caption{A path graph with $x,y$ next to the ends.} \label{fig10}
\end{center}
\end{figure}

\begin{exa} \label{ends}
Consider the graph $G$ as in \autoref{fig8}. \\
\underline{The case $d(x,y)=1$}: 
Since $G=K_2$, the result is the same as that of \autoref{complete} and \autoref{cycle}. \vspace{2mm} \\
\underline{The case $d(x,y)\geq2$}: 
Since $\whxy=d(x,y)h(1)-2\{h(1)-h(\A)\}$, 
\begin{align*} 
\kaxy=\frac{2(1-\A)}{d(x,y)} ,\qquad \tkaxy=\frac{2\{h(1)-h(\A)\}}{d(x,y)h(1)}
\end{align*} 
hold, and thus we have 
\begin{align*} 
\K(0;x,y)&=\frac{2}{d(x,y)},& \K\left(\frac{1}{2};x,y\right)&=\frac{1}{d(x,y)},& \K(x,y)&=\frac{2}{d(x,y)}, \\
\tilde{\K}(0,h;x,y)&=\frac{2}{d(x,y)},& 
\tilde{\K}\left(\frac{1}{2},h;x,y \right)&=\frac{2\left\{h(1)-h\big(\frac{1}{2}\big)\right\}}{d(x,y)h(1)},& 
\tkxy&=\frac{2h^\prime(1)}{d(x,y)h(1)} .
\end{align*}
\qed \end{exa}

\begin{exa}
Consider the graph $G$ as in \autoref{fig9}. \\
\underline{The case $d(x,y)=1$}: 
Since $\whxy=h\big(\left|\A-\frac{1-\A}{2}\right|\big)+h\big(\frac{1-\A}{2}\big)$, 
\begin{align*}
\kaxy=1-\left|\A-\frac{1-\A}{2}\right|-\frac{1-\A}{2},\qquad 
\tkaxy=\frac{h(1)-h\big(\left|\A-\frac{1-\A}{2}\right|\big)-h\big(\frac{1-\A}{2}\big)}{h(1)}
\end{align*} 
hold, and thus we have 
\begin{align*} 
\K(0;x,y)&=0,& \K(1/2;x,y)&=1/2,& \kxy=1, \\
\tilde{\K}(0,h;x,y)&=\frac{h(1)-2h\big(\frac{1}{2}\big)}{h(1)},& 
\tilde{\K}\left(\frac{1}{2},h;x,y \right)&=\frac{h(1)-2h\big(\frac{1}{4}\big)}{h(1)},& 
\tkxy&=\frac{3h^\prime(1)-h^\prime(0)}{2h(1)} .
\end{align*} \vspace{2mm}
\underline{The case $d(x,y)\geq2$}: 
Since $\whxy=h(\A)+\big\{d(x,y)-2\big\}h(1)+h\big(\A+\frac{1-\A}{2}\big)+h\big(\frac{1-\A}{2}\big)$, 
\begin{align*} 
\kaxy=\frac{1-\A}{d(x,y)},\qquad 
\tkaxy=\frac{2h(1)-h(\A)-h\big(\A+\frac{1-\A}{2}\big)-h\big(\frac{1-\A}{2}\big)}{d(x,y)h(1)}
\end{align*} 
hold, and thus we have 
\begin{align*} 
\K(0;x,y)&=\frac{1}{d(x,y)},& \K\left(\frac{1}{2};x,y\right)&=\frac{1}{2d(x,y)},& \K(x,y)&=\frac{1}{d(x,y)}, \\
\tilde{\K}(0,h;x,y)&=\frac{2\left\{h(1)-h\big(\frac{1}{2}\big)\right\}}{d(x,y)h(1)},& 
\tilde{\K}\left(\frac{1}{2},h;x,y \right)&=
\frac{2h(1)-h\big(\frac{3}{4}\big)-h\big(\frac{1}{2}\big)-h\big(\frac{1}{4}\big)}{d(x,y)h(1)},& 
\tkxy&=\frac{3h^\prime(1)-h^\prime(0)}{2d(x,y)h(1)} .
\end{align*}
\qed \end{exa}

\begin{exa} \label{Z}
Consider the graph $G$ as in \autoref{fig10}. \\
\underline{The case $d(x,y)=1$}: 
Since $\whxy=h(\A)+2h\big(\frac{1-\A}{2}\big)$, 
\begin{align*} 
\kaxy=0,\qquad \tkaxy=\frac{h(1)-h(\A)-2h\left(\frac{1-\A}{2}\right)}{h(1)}
\end{align*} 
hold, and thus we have 
\begin{align*} 
\K(0;x,y)&=0,& \K(1/2;x,y)&=0,& \K(x,y)&=0, \\
\tilde{\K}(0,h;x,y)&=\frac{h(1)-2h\big(\frac{1}{2}\big)}{h(1)},& 
\tilde{\K}\left(\frac{1}{2},h;x,y \right)&=\frac{h(1)-h\big(\frac{1}{2}\big)-2h\big(\frac{1}{4}\big)}{h(1)},& 
\tkxy&=\frac{h^\prime(1)-h^\prime(0)}{h(1)} .
\end{align*} 
\underline{The case $d(x,y)\geq2$}: 
Since $\whxy=2h\big(\A+\frac{1-\A}{2}\big)+\big\{d(x,y)-2\big\}h(1)+2h\big(\frac{1-\A}{2}\big)$, 
\begin{align*} 
\kaxy=0,\qquad 
\tkaxy=\frac{2\left\{h(1)-h\big(\A+\frac{1-\A}{2}\big)-h\big(\frac{1-\A}{2}\big)\right\}}{d(x,y)h(1)}
\end{align*} 
hold, and thus we have 
\begin{align*} 
\K(0;x,y)&=0,& \K(1/2;x,y)&=0,& \K(x,y)&=0, \\
\tilde{\K}(0,h;x,y)&=\frac{2\left\{h(1)-2h\big(\frac{1}{2}\right)\big\}}{d(x,y)h(1)},& 
\tilde{\K}\left(\frac{1}{2},h;x,y \right)&=\frac{2\left\{h(1)-h\big(\frac{3}{4}\big)-h\big(\frac{1}{4}\big)\right\}}{d(x,y)h(1)},&
\tkxy&=\frac{h^\prime(1)-h^\prime(0)}{d(x,y)h(1)} .
\end{align*}
\qed \end{exa}

\begin{rem}
By \autoref{Z}, the LLY--Ricci curvature is $0$ for any two points in $\Z$.
In contrast, the $h$-LLY--Ricci curvature is negative if $h$ is nonlinear due to the concavity of $h$ \big($h^\prime(1) < h^\prime(0)$\big).
\end{rem}

\section{Bonnet--Myers type inequality} \label{B-M section}

We show a Bonnet--Myers type theorem for the $h$-LLY--Ricci curvature.
Recall \autoref{LLY-BM} for the Bonnet--Myers type theorem for the LLY--Ricci curvature.
We remark that if we simply follow the argument \cite{LLY}, we obtain only a weaker estimate than \eqref{ineq:LLY-BM} (see \autoref{modify}).
The following lemma is a key ingredient to improve the estimate.

\begin{lem} \label{lowerbound}
For any vertices $x,y$ such that $d(x,y)\geq2$ and any $\A\in[0,1]$, we have
\begin{align*}
\whxy\geq2h(\A)+\big\{d(x,y)-2\big\}h(1).
\end{align*}
\end{lem}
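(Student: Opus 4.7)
The plan is to bound $\whxy$ from below by decomposing any admissible sequence into disjoint classes of steps indexed by cuts around $x$, and applying a short concavity estimate to each class. Fix any sequence $\xi_0=\rwx\to\xi_1\to\cdots\to\xi_I=\rwy$ with $\supp(\xi_i-\xi_{i-1})\subseteq e_i$ for some hyperedge $e_i$, and set $w_i\:=\W(\xi_{i-1},\xi_i)$. Because any two vertices of $e_i$ are adjacent (so at graph distance exactly $1$), $w_i$ equals the total mass moved at step $i$, and the distances to $x$ of the vertices of $e_i$ span at most two consecutive integers.

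Write $d\:=d(x,y)$ and $A_k\:=\{v\in V:d(v,x)\leq k\}$. Introduce the step classes
\[X\:=\{i:x\in e_i\},\quad Y\:=\{i:y\in e_i\},\quad S_k\:=\{i:e_i\cap A_k\neq\emptyset\text{ and }e_i\cap A_k^c\neq\emptyset\}\quad(1\leq k\leq d-2).\]
Each $e_i$ has distance spread at most $1$ from $x$, so it can straddle at most one $\partial A_k$; combined with $x\in e_i\Rightarrow e_i\subseteq A_1$, $y\in e_i\Rightarrow e_i\subseteq V\setminus A_{d-2}$, and $X\cap Y=\emptyset$ (since $x\not\sim y$ when $d\geq2$), the classes $X,Y,S_1,\dots,S_{d-2}$ are pairwise disjoint. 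Telescoping the mass at $\{x\}$, which changes only at steps in $X$ and by at most $w_i$ per step, yields $\sum_{i\in X}w_i\geq \A$, and symmetrically $\sum_{i\in Y}w_i\geq \A$. For each $k\in\{1,\dots,d-2\}$, $\xi_i(A_k^c)$ increases from $0$ (since $\supp\rwx\subseteq A_1\subseteq A_k$) to $1$ (since $\supp\rwy\subseteq B_1(y)\subseteq A_k^c$), changing only at steps in $S_k$ and by at most $w_i$ per step, so $\sum_{i\in S_k}w_i\geq 1$.

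The finishing step is a one-line concavity lemma: if $h$ is concave on $[0,1]$ with $h(0)=0$ and $w_i\in[0,1]$ satisfy $\sum_iw_i\geq t\in[0,1]$, then $\sum_ih(w_i)\geq h(t)$. (Either some $w_j\geq t$ and $h(w_j)\geq h(t)$ by monotonicity, or all $w_i<t$ and $h(w_i)\geq(w_i/t)h(t)$ by concavity with $h(0)=0$, and summing gives the bound.) Applying this to $X,Y$ with $t=\A$ and to each $S_k$ with $t=1$, together with disjointness, gives
\[\sum_ih(w_i)\;\geq\;\sum_{i\in X}h(w_i)+\sum_{i\in Y}h(w_i)+\sum_{k=1}^{d-2}\sum_{i\in S_k}h(w_i)\;\geq\;2h(\A)+(d-2)h(1),\]
and taking the infimum over paths yields $\whxy\geq 2h(\A)+(d-2)h(1)$. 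The main subtlety is the disjointness argument, which crucially uses that any single hyperedge has distance spread at most $1$ from any fixed vertex, so it can contribute to at most one cut level $\partial A_k$.
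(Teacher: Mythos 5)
Your proof is correct, and it takes a genuinely different route from the paper's. The paper proves the lemma by collapsing $H$ onto the line graph $\big[d(x,y)\big]_0$ via an explicit $1$-Lipschitz map $F$ (constant on the ``middle'' set of vertices roughly equidistant from $x$ and $y$), showing that $W_h$ does not increase under the pushforward by $F$, and then invoking the explicit computation of $W_h$ on a line graph with $x,y$ at its ends from the Examples section. You instead work directly on $H$: you partition the steps of an arbitrary admissible sequence into the classes $X$, $Y$, $S_1,\dots,S_{d-2}$, use the fact that a single hyperedge has distance spread at most $1$ from $x$ (so it meets at most one cut $\partial A_k$ and cannot contain $x$ or $y$ while also straddling a cut) to get pairwise disjointness, lower-bound the total $W_1$-mass in each class by tracking $\xi_i(\{x\})$, $\xi_i(\{y\})$ and $\xi_i(A_k^c)$, and finish with the elementary superadditivity estimate $\sum_i h(w_i)\geq h(t)$ for concave $h$ with $h(0)=0$ and $\sum_i w_i\geq t$. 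All the individual claims check out (in particular $W_1(\xi_{i-1},\xi_i)\geq|\xi_i(A)-\xi_{i-1}(A)|$ for any $A$, since each unit of transported mass travels distance at least $1$, and $m_y^\A(x)=0$, $B_1(y)\subseteq A_k^c$ for $k\leq d-2$ because $d(x,y)\geq2$). The paper's approach buys a reusable principle (contraction of $W_h$ under $1$-Lipschitz maps) and a geometric explanation of why the line graph is extremal; yours is more self-contained, avoids relying on the separate line-graph example, and sidesteps the mild imprecision that the pushforward measures $F_\ast\rwx,F_\ast\rwy$ are not literally the random-walk measures of the line graph.
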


We remark that the right-hand side is the transport cost for a path graph with $x,y$ at its ends (see \autoref{ends}).
This inequality is consistent with the impression that a path graph would be most affected by the concavity of $h$.

\begin{proof}
For vertices $x,y$ such that $d(x,y)\geq2$, 
we see that $\whxy$ takes its minimum value when the hypergraph $H=(V,E)$ is a path graph $G^\prime=\big(V^\prime=\big[d(x,y)\big]_0,E ^\prime\big)$ in \autoref{Lem3.3.3}.
\begin{figure}[H]
\begin{center}
\begin{tikzpicture}[every node/.style={circle,fill=white}]
\draw (0,0) node (x) [draw] {$x$} (2,0) node (a) {} (x)--(a);
\draw (2,0) node (a) [draw] {\;} (3,0) node (b) {} (a)--(b);
\draw[dotted] (3,0) node (b) {\;} (4,0) node (c) {} (b)--(c);
\draw (4,0) node (c) {} (5,0) node (d) [draw] {\;} (c)--(d); 
\draw (5,0) node (d) [draw] {\;} (7,0) node (y) [draw] {$y$} (d)--(y);
\end{tikzpicture}
\caption{A graph in $\Z$; $G^\prime=\big([d(x,y)]_0,E^\prime\big)$, $E^\prime\:=\big\{(\ell-1,\ell) \;\big|\;\ell\in[d(x,y)]\big\}$.} \label{Lem3.3.3}
\end{center}
\end{figure} 
To this end, the vertex set $V$ of the original hypergraph $H$ is decomposed into:
\begin{align*} 
V_1 &\:= \big\{ v\in V \;\big|\; d(v,x)<d(x,y)/2 \big\} , \\
V_2 &\:= \big\{ v\in V \;\big|\; d(v,y)<d(x,y)/2 \big\} , \\
V_3 &\:= \big\{ v\in V \;\big|\; d(v,x)\geq d(x,y)/2 \text{ and } d(v,y)\geq d(x,y)/2 \big\} .
\end{align*}
Then we define a map $F:V\to \big[d(x,y)\big]_0$ by \vspace{-4mm}
\begin{center} 
\[ \dis F(v) \:=
\begin{cases} 
d(x,v) & (v\in V_1), \vspace{1mm} \\
d(x,y)-d(v,y) & (v\in V_2), \vspace{1mm} \\
\left\lfloor \frac{d(x,y)}{2} \right\rfloor & (v\in V_3).
\end{cases}\]
\end{center} 
The proof consists of two steps. \vspace{1mm} \\
\underline{\textbf{Step 1}}: We show that the function $F$ defined above is a $1$-Lipschitz function, i.e.,
\begin{align*}
d\big(F(v),F(w)\big) = |F(v)-F(w)| \leq d(v,w)
\end{align*}
holds for any $v,w\in V$.
We only need to consider the case where $v,w\in V$ are in different domains. \vspace{1mm} \\
$\maru{1}$ The case $v\in V_1,w\in V_2$:
\begin{align*} 
d\big(F(v),F(w)\big) &= \{d(x,y)-d(w,y)\}-d(x,v) \leq d(v,w) .
\end{align*}
$\maru{2}$ The case $v\in V_1,w\in V_3$:
\begin{align*} 
d\big(F(v),F(w)\big) &= \left\lfloor\frac{d(x,y)}{2}\right\rfloor -d(x,v) \leq d(x,w)-d(x,v) \leq d(v,w).
\end{align*}
The first inequality is due to the definition of $V_3$. \vspace{1mm} \\
$\maru{3}$ The case $v\in V_2,w\in V_3$:
\begin{align*} 
d\big(F(v),F(w)\big) &= \{d(x,y)-d(v,y)\}-\left\lfloor\frac{d(x,y)}{2}\right\rfloor = \left\lceil\frac{d(x,y)}{2}\right\rceil-d(v,y) \leq d(w,y)-d(v,y) \leq d(v,w).
\end{align*}
The first inequality is due to the fact that the graph distance is a non-negative integer value.

Therefore, we conclude that $F$ is a $1$-Lipschitz function. \vspace{2mm} \\
\underline{\textbf{Step 2}}: Next, we prove
\begin{align} 
\wh(F_\ast \rwx,F_\ast \rwy) \leq \whxy, \label{pushforward}
\end{align}
then the lemma is proved, where $F_\ast\rwx,F_\ast\rwy$ are the push-forward measures of $\rwx,\rwy$ by $F$, respectively.
Take a shortest path $\rwx = \mu_0 \to \mu_1 \to \cdots \to \mu_{I} = \rwy$
of the right-hand side $\whxy$ and an optimal coupling $\pi_i\in\Pi(\mu_{i-1},\mu_i)$ ($i\in[I]$).
Then, we have
\begin{align} 
\whxy &= \sum_{i=1}^I h\left( \W(\mu_{i-1},\mu_i) \right) = \sum_{i=1}^I h\left( \sum_{v,w\in V} d(v,w)\pi_i(v,w) \right) 
\geq \sum_{i=1}^I h\left( \sum_{v,w\in V} d\big(F(v),F(w)\big){\pi}_i(v,w) \right) \label{Lip-est} \\
&= \sum_{i=1}^I h\left( \sum_{a,b\in V^\prime} |a-b|{\bar{\pi}}_i(a,b) \right) \geq \wh(F_\ast \rwx,F_\ast \rwy), \nonumber 
\end{align}
which yields \eqref{pushforward}.
Here, $\bar{\pi}_i$ is the push-forward of $\pi_i$ by $F\times F$ and hence a coupling of $F_\ast(\mu_{i-1})$ and $F_\ast(\mu_i)$ for each $i\in[I]$.
Moreover, we use the $1$-Lipschitz property of $F$ shown in \textbf{Step 1} in the inequality of \eqref{Lip-est}.
\end{proof}

\begin{thm}[Bonnet--Myers type inequality] \label{BM}
For any hypergraph $H=(V,E)$, if there exists some $\K>0$ such that $\tkxy\geq\K$ for any two adjacent vertices $x,y$, then we have
\begin{align} 
\diam(H) \leq \left\lfloor \frac{h^{\prime}(1)}{h(1)}\cdot\frac{2}{\K} \right\rfloor. \label{ineq:BM}
\end{align}
\end{thm}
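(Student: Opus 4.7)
The plan is to mirror the Lin--Lu--Yau argument for \autoref{LLY-BM}, using \autoref{lowerbound} to replace the crude triangle inequality on $W_1$ that appears in the graph setting by a sharp lower bound on $W_h$. Exactly this substitution is what prevents a stray factor of $h^\prime(0)/h(1)$ from appearing, as already alluded to in \autoref{on 2-boundness}.

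First I would invoke \autoref{hboundness} to promote the hypothesis $\tkxy \geq \K$ from adjacent pairs to all distinct $x,y \in V$. Next, fix such a pair with $d \:= d(x,y) \geq 2$ and any $\eps \in (0,\K)$. Since $\tkxy = \liminf_{\A\uto1} \tkaxy/(1-\A) \geq \K$, there exists $\A_0 \in (0,1)$ such that for every $\A \in (\A_0, 1)$,
\begin{align*}
\tkaxy \geq (\K - \eps)(1-\A),
\end{align*}
which, together with $\wh(\dex,\dey) = h(1)d$, yields the upper bound $\whxy \leq h(1)\,d\,\bigl[1 - (\K-\eps)(1-\A)\bigr]$. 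On the other hand, \autoref{lowerbound} provides the matching lower bound $\whxy \geq 2h(\A) + (d-2)h(1)$. Combining the two and rearranging gives
\begin{align*}
\frac{2\bigl(h(1) - h(\A)\bigr)}{1-\A} \geq h(1)\,d\,(\K - \eps).
\end{align*}
Letting $\A \uto 1$, the left-hand side converges to $2h^\prime(1)$ by \autoref{h}; letting $\eps \dto 0$ afterwards gives $d \leq 2h^\prime(1)/(h(1)\K)$. Because $d$ is a positive integer, $d \leq \lfloor 2h^\prime(1)/(h(1)\K) \rfloor$, which is \eqref{ineq:BM} after taking the supremum over $x,y$.

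The one case not covered above is $d(x,y) = 1$, where \autoref{lowerbound} does not apply. Here I would construct a $1$-Lipschitz map $F\colon V \to \{0,1\}$ that sends each vertex to whichever of $x,y$ it lies closer to (ties broken toward $x$), push the random walks forward, and apply $\whxy \geq W_h(F_\ast \rwx, F_\ast \rwy)$ together with a first-order expansion of $h$ at $1$. This yields $\tkxy \leq 2h^\prime(1)/h(1)$ for every adjacent pair, forcing $\lfloor 2h^\prime(1)/(h(1)\K) \rfloor \geq 1$ and so the diameter bound also holds when $\diam(H) = 1$. The main delicate point throughout is keeping the difference quotient $\bigl(h(1) - h(\A)\bigr)/(1-\A)$ decoupled from the $\liminf$ defining $\tkxy$; the $\eps$-approximation of $\K$ makes this clean because \autoref{lowerbound} is valid for every $\A \in [0,1]$, not merely along subsequences.
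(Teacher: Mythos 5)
Your proposal is correct and follows essentially the same route as the paper: reduce to all pairs via \autoref{hboundness}, combine \autoref{lowerbound} with $\wh(\dex,\dey)=h(1)d(x,y)$, divide by $1-\A$ and let $\A\uto1$ to get $\tkxy\leq 2h^\prime(1)/\big(h(1)d(x,y)\big)$, then treat $d(x,y)=1$ separately. Your two refinements --- the explicit $\eps$-handling of the $\liminf$ and the two-point $1$-Lipschitz pushforward for the distance-one case --- only make rigorous what the paper does more informally (it simply asserts that the maximum of $\tkxy$ is attained on $K_2$), so no substantive difference remains.
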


\begin{proof}
We first consider the case of $\diam(H)\geq2$.
Take any $x,y\in V$ and $\A\in[0,1)$ such that $d(x,y)\geq2$.
By \autoref{lowerbound},
\begin{align*} 
1-\frac{\whxy}{\wh(\dex,\dey)} 
&\leq 1-\frac{2h(\A)+\{d(x,y)-2\}h(1)}{\wh(\dex,\dey)} = \frac{2\{h(1)-h(\A)\}}{\wh(\dex,\dey)}.
\end{align*}
Dividing both sides by $1-\A$, we have 
\begin{align*} 
\frac{\tkaxy}{1-\A} \leq \frac{2}{\wh(\dex,\dey)}\cdot\frac{h(1)-h(\A)}{1-\A}.
\end{align*}
Letting $\A\uto1$ yields
\begin{align} 
\tkxy \leq \frac{2h^{\prime}(1)}{\wh(\dex,\dey)}. \label{modifiedBM}
\end{align}
Since $\tkxy>0$ and $W_h\big(\dex,\dey\big)=h(1)d(x,y)$,
\begin{align*} 
d(x,y)\leq\frac{h^{\prime}(1)}{h(1)}\cdot\frac{2}{\tkxy}. 
\end{align*}
Finally, since $\tkxy\geq\K$ from \autoref{hboundness},
\begin{align*} 
d(x,y)\leq\frac{h^{\prime}(1)}{h(1)}\cdot\frac{2}{\K}.
\end{align*} 

When $\diam(H)=1$, the distance between any two vertices is  $1$ and it is sufficient to show that 
$\tkxy \leq 2h^\prime(1)/h(1)$ for any $x,y\in V$.
Note that $\tkaxy$ is larger if and only if $\whxy$ is smaller, and 
hence the maximum value of $\tkxy$ is achieved in the complete graph $K_2$.
In $K_2$, we have $\tkxy=2h^\prime(1)/h(1)$ (see \autoref{complete}), which shows the claim.
\end{proof}

\begin{rem} \label{modify}
In \cite{LLY}, $\kaxy$ was estimated from above by using the triangle inequality via $\dex$ and $\dey$.
However, in our setup, the same argument provides only 
$\diam(H) \leq \big(h^\prime(0)/h(1)\big) \cdot (2/\K)$, 
which is weaker than the bound $\diam(H) \leq 2/\K$ in \eqref{ineq:LLY-BM} 
(even though the assumption $\tkxy \geq \K$ is stronger than $\kxy \geq \K$).
Therefore, the improvement by virtue of \autoref{lowerbound} is 
essential to capture a nontrivial result for hypergraphs.
\end{rem}

\begin{rem}
From \eqref{modifiedBM}, if $h$ satisfies $h^\prime(1)=0$, then $\inf_{x,y\in V}\tkxy\leq0$.
\end{rem}

\begin{exa}
For a complete graph $K_n$, equality holds in \eqref{ineq:BM} (\autoref{complete}).
\end{exa}

\begin{cor} \label{cpt}
A hypergraph $H$ is bounded and in particular compact if its $h$-LLY--Ricci curvature is greater than or equal to a positive constant.
\end{cor}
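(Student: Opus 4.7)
The plan is to deduce this corollary directly from the Bonnet--Myers type inequality \autoref{BM} combined with the standing assumption that our hypergraphs are connected and locally finite. The assumption $\tkxy \geq \K > 0$ for all adjacent $x,y$ is precisely the hypothesis of \autoref{BM}, so we immediately obtain
\begin{align*}
\diam(H) \leq \left\lfloor \frac{h^\prime(1)}{h(1)} \cdot \frac{2}{\K} \right\rfloor < \infty,
\end{align*}
which establishes boundedness.

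For compactness, I would argue that a connected, locally finite hypergraph of finite diameter has only finitely many vertices. Fix any $v_0 \in V$ and let $D \:= \diam(H)$. Since $H$ is connected, $V = B_D(v_0)$. I would show by induction on $r \in [D]_0$ that $B_r(v_0)$ is finite: the base case $r=0$ is trivial, and for the inductive step, each $v \in S_r(v_0)$ has $d_v < \infty$ by local finiteness, so $S_r(v_0)$ is contained in the union over $u \in B_{r-1}(v_0)$ of the (finite) neighborhoods of $u$, hence $B_r(v_0) = B_{r-1}(v_0) \cup S_r(v_0)$ is finite. Taking $r = D$ yields $|V| < \infty$.

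Finally, a finite set endowed with the discrete metric inherited from the graph distance is automatically compact, so $H$ is compact. There is no real obstacle here; the statement is an essentially formal consequence of \autoref{BM} together with the structural hypotheses (connected and locally finite) imposed in \autoref{defs of hypers}. The only mild subtlety is spelling out that bounded plus locally finite plus connected implies finite, which uses the inductive ball argument above.
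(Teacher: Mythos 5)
Your proposal is correct and follows the same route the paper intends: Corollary~\ref{cpt} is stated without proof as an immediate consequence of the Bonnet--Myers bound in \autoref{BM}, which gives $\diam(H)<\infty$ under the curvature hypothesis. Your additional inductive argument that a connected, locally finite hypergraph of finite diameter has finitely many vertices (hence is compact) is a correct and welcome elaboration of the step the paper leaves implicit in the phrase ``in particular compact.''
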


Moreover, the number of vertices can be estimated from above by using a lower bound of the $h$-LLY--Ricci curvature.

\begin{cor} 
For a hypergraph $H=(V,E)$, if there exists some positive constant $\K$ such that $\tkxy\geq\K$ for any two adjacent vertices $x,y$, then 
\begin{align} 
|V| \leq 1+\sum_{j=1}^{\lfloor 2h^{\prime}(1)/(h(1)\K) \rfloor}\triangle^j \prod_{i=1}^{j-1} \left( 1-\frac{\K}{2}i \right) \label{vertex-size}
\end{align}
holds.
Here, $\triangle$ is the maximum degree of the hypergraph $H$, i.e., $\triangle\:=\max_{v\in V}d_v$.
\end{cor}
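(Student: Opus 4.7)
The plan is to combine the diameter bound from \autoref{BM} with a curvature-controlled sphere growth estimate around an arbitrary vertex. By \autoref{hboundness} the hypothesis $\tilde{\kappa}(h;x,y) \geq \K$ on adjacent pairs extends to all $x, y \in V$, and \autoref{relation} upgrades this to $\kappa(x,y) \geq \tilde{\kappa}(h;x,y) \geq \K$ for every pair of distinct vertices. Combined with \autoref{BM} we have $\diam(H) \leq D \:= \lfloor 2h^\prime(1)/(h(1)\K) \rfloor$, so for any fixed $v \in V$ the decomposition $V = \bigsqcup_{j=0}^{D}S_j(v)$ yields $|V| = 1 + \sum_{j=1}^{D}|S_j(v)|$, and \eqref{vertex-size} reduces to proving the sphere bound $|S_j(v)| \leq \triangle^{j}\prod_{i=1}^{j-1}(1 - \K i/2)$ for every $j \in [D]$.

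The heart of the argument is the one-step recursion $|S_{r+1}(v)| \leq \triangle(1 - r\K/2)\cdot|S_r(v)|$, established for each $r \geq 1$ with $r+1 \leq D$. To derive it, fix $y \in S_r(v)$ and $\A \in [0,1)$, and apply Kantorovich duality to the $1$-Lipschitz function $f(\cdot) \:= d(v, \cdot)$:
\begin{align*}
\int f \, dm_y^\A - \int f \, dm_v^\A \leq W_1(m_v^\A, m_y^\A) = (1 - \kappa(\A;v,y))\cdot r.
\end{align*}
Since $f(v)=0$ and $f(w)=1$ for each $w \sim v$, the left-hand side equals $\A r + \frac{1-\A}{d_y}\sum_{z\sim y}d(v,z) - (1-\A)$. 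Dividing by $1-\A$ and letting $\A \uparrow 1$ (using $\kappa(\A;v,y)/(1-\A) \to \kappa(v,y) \geq \K$), we obtain
\begin{align*}
\frac{1}{d_y}\sum_{z \sim y} d(v,z) \leq (r+1) - r\cdot\kappa(v,y) \leq (r+1) - r\K.
\end{align*}

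Splitting the sum according to the only possible values $d(v,z)\in\{r-1,r,r+1\}$ and writing $a,b,c$ for the corresponding counts, the previous inequality rearranges into $c - a \leq d_y(1 - r\K)$. Using $a + b + c = d_y$ and hence $a \leq d_y - c$, we deduce $c \leq d_y(1 - r\K/2) \leq \triangle(1 - r\K/2)$. Since each vertex in $S_{r+1}(v)$ has at least one neighbor in $S_r(v)$,
\begin{align*}
|S_{r+1}(v)| \leq \sum_{y \in S_r(v)} c(y) \leq \triangle(1 - r\K/2)\,|S_r(v)|.
\end{align*}
Iterating this from the trivial bound $|S_1(v)| \leq \triangle$ produces the desired sphere estimate; the factors $1 - \K i/2$ remain non-negative throughout because $D \leq 2/\K$ (as $h^\prime(1) \leq h(1)$ by concavity of $h$).

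The main delicate point is the passage to the limit $\A \uparrow 1$ in the Kantorovich step. One cannot manipulate $\tilde{\kappa}$ this way directly, since \autoref{conj} on the concavity and piecewise linearity of $\A \mapsto \tkaxy$ is open for hypergraphs. The remedy is precisely to run the argument through the $W_1$-based curvature $\kappa(v,y)$, which inherits the lower bound $\K$ from $\tilde{\kappa}(h;v,y)$ via \autoref{relation}; since $W_1$ on $H$ coincides with $W_1$ on its clique expansion, the graph-level results \autoref{global concavity} and \autoref{LLY-bdd} apply and legitimise the limit. This detour is the essential technical ingredient of the proof.
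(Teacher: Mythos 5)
Your proposal is correct and follows essentially the same route as the paper: both reduce the problem to the $W_1$-based curvature (you via $\kappa(x,y)\geq\tkxy\geq\K$ from \autoref{relation}, the paper via the equivalent inequality $\whxy\geq h(1)\wxy$), and then run the Lin--Lu--Yau sphere-growth argument combined with the diameter bound of \autoref{BM}. The only difference is that the paper cites \cite[Lemma 4.4, Theorem 4.3]{LLY} for the Kantorovich-duality estimate on $|\Gamma_v^+(y)|$ and the iteration $|S_{r+1}(v)|\leq\triangle(1-r\K/2)|S_r(v)|$, whereas you write these steps out explicitly (and correctly, including the justification of the limit $\A\uto1$ through the clique expansion).
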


\begin{proof}
As seen in \eqref{W and h(1)W_1}, $\whxy\geq h(1)\wxy$ holds for any $x,y\in V,\A\in[0,1]$.
Then we can apply the calculation in the proof of \cite[Lemma 4.4]{LLY} to $\wxy$ and find 
\begin{align*}
\tkxy \leq \frac{1}{d(x,y)} \left( 1+\frac{|\Gamma_x^-(y)|-|\Gamma_x^+(y)|}{d_y} \right),
\end{align*}
where 
\begin{align*}
\Gamma_x^+(y) \:= \big\{ v\in S_1(y) \;\big|\; d(x,v)=d(x,y)+1 \big\}, \quad 
\Gamma_x^-(y) \:= \big\{ v\in S_1(y) \;\big|\; d(x,v)=d(x,y)-1 \big\}.
\end{align*}
With this estimate in hand, we can follow the line of \cite[Theorem 4.3]{LLY} to see \eqref{vertex-size}.
\end{proof}

\section{Further Problems} \label{FP}

In this section, we discuss some problems to be considered for the further development of 
the transport distance $W_h$ and its associated Ricci curvature $\tkxy$ introduced in this paper.

\begin{duality}
As is well known, $W_1$ has a duality called the Kantorovich duality or the Kantorovich--Rubinstein formula.
Is it possible to obtain such a transformation for $W_h$?
We may restrict the class of concave functions $h$, for example, in terms of the strength of concavity.
Such a duality will increase the scope of future development, e.g., 
it would lead to the derivation of some properties such as Lipschitz contraction (\cite[Proposition 29]{Ol}).
\end{duality}

\begin{curvshape}
Recall that we defined $\tkxy$ in \autoref{with h} in the form $\liminf$, does this limit exist?
To this end, we need to examine the shape of $\tkaxy$ on $[0,1]$ as studied in \cite{BCLMP,CK} for $W_1$. 
However, as described in \autoref{conj}, the global concavity of $\tkaxy$ is lost and we expect it to be a piecewise convex function.
We need a finer analysis of transports between random walks and \autoref{trans path} would provide a clue.
\end{curvshape}

\begin{long-scale}
By the consolidating property of the transport distance $W_h$, there may be some new and interesting significance in the curvature between vertices that are not adjacent.
It would be worthwhile to consider this problem for graphs as well, since this is a unique property of $W_h$.
Furthermore, the transport path by $W_h$ between the different clusters is expected to include ``bridges'' connecting these clusters.
In this sense, $W_h$ could be useful for community detection.
\end{long-scale}

\begin{how}
Most of our estimates are in terms of $h^\prime(0)$ and $h^\prime(1)$.
It would be worthwhile considering if we can distinguish the roles of $h^\prime(0)$ and $h^\prime(1)$ 
more clearly.
In the hypothesis $h^\prime(0)\in(0,\infty)$, on one hand, $h^\prime(0)>0$ is natural for avoiding the null 
cost $h\equiv0$.
On the other hand, $h^\prime(0)<\infty$ is necessary only in the estimates using $h^\prime(0)$, 
such as some calculations of the curvature $\tkxy$ in \autoref{Examples}.
For those not including $h^\prime(0)$ like \autoref{BM}, we do not need to assume $h^\prime(0)<\infty$ and, 
e.g., the power functions $h(\lambda)=\lambda^a$ $(a\in(0,1))$ can be employed.
\end{how}

\begin{Appli}
Although the setting is different from this paper, the transport problem for a concave cost is formulated also by \cite{Xi}, and it is applied to Urban Transportation Models (see also \cite{BCM}).
It would be interesting to have a similar discussion for $W_h$ on (hyper)graphs.
In particular, the variation of shortest paths of $W_h$ in $h$ could be useful to extract new geometric features of (hyper)graphs.
For instance, the variation of the shortest paths by increasing the concavity of $h$ is similar to the distance transformation by the $d_\A$ metric in \cite{KW}, which is the distance between two vertices.
\end{Appli}


\end{document}